\def\beq{\begin{equation}}
\def\eeq{\end{equation}}
\def\ba{\begin{array}}
\def\ea{\end{array}}
\newtheorem{thm}{Theorem}[section]
\newtheorem{lm}[thm]{Lemma}
\newtheorem{prop}[thm]{Proposition}
\newtheorem{crl}[thm]{Corollary}
\theoremstyle{definition}
\newtheorem{df}[thm]{Definition}
\theoremstyle{remark}
\begin{document}
\pagestyle{plain}
\title{Existence and multiplicity of solutions to discrete fractional logarithmic Kirchhoff equations}

\author{Lidan Wang}
\email{wanglidan@ujs.edu.cn}
\address{Lidan Wang: School of Mathematical Sciences, Jiangsu University, Zhenjiang 212013, People's Republic of China}

\begin{abstract}
In this paper, we study the discrete fractional logarithmic Kirchhoff equation
$$
\left(a+b \int_{\mathbb{Z}^d}|\nabla^s u|^{2} d \mu\right) (-\Delta)^s u+h(x) u=|u|^{p-2}u \log u^{2}, \quad x\in \mathbb{Z}^d,
$$
where $a,\,b>0$ and $0<s<1$. Under suitable assumptions on $h(x)$, we first prove the existence of ground state solutions by the mountain-pass theorem for $p>4$; then we verify the existence of ground state sign-changing solutions based on the method of Nehari manifold for $p>6$. Finally, we establish the multiplicity of nontrivial weak solutions.
\end{abstract}

\maketitle

{\bf Keywords:} Fractional logarithmic Kirchhoff equations, existence, ground state solutions, ground state sign-changing solutions

\
\

{\bf Mathematics Subject Classification 2020:} 35A15, 35R02, 35R11.

\section{Introduction}

The fractional Laplacian, understood as a positive power of the classical Laplacian, has a wide range of applications arising in some physical
phenomena such as fractional quantum mechanics, flames propagation, see \cite{L8,DP}.
In the last decades, a lot of attention has been focused on the problems involving fractional Laplace operators and Kirchhoff-type nonlocal terms,
$$\left(a+b \int_{\mathbb{R}^d \times \mathbb{R}^d}\frac{|u(x)-u(y)|^2}{|x-y|^{d+2s}}\,dxdy\right) (-\Delta)^s u+h(x) u=g(u),$$
where $a,\,b>0$ and $s\in (0,1)$. This fractional Kirchhoff equation was first introduced in \cite{FV}. After that, many remarkable results have been yielded, see \cite{A1,GY,I1,SC,WG,W9} and the references therein. 

As we know, the logarithmic nonlinearity $g(u)=|u|^{p-2}u \log u^{2}$ has many applications in quantum optics, quantum
mechanics, transport, nuclear physics and diffusion phenomena etc, see \cite{Z1}. This makes many scholars study the Kirchhoff-type problems with logarithmic nonlinearity. For the logarithmic Schr\"{o}dinger equations, we refer the readers to \cite{AJ1,AJ,FT,S,ZZ2}. For the logarithmic Kirchhoff equations, we refer the readers to \cite{GJ,HG,LWZ,OA,WTC,WL}. For the fractional logarithmic Kirchhoff equations, we refer the readers to \cite{FW,HS,L9,YB,XH}. 

In recent years, there are many works on graphs, see for examples \cite{HSZ,HLW,HW,LW,SYZ,WW,YZ}.  For the discrete logarithmic Sch\"{o}dinger equations on graphs, we refer the readers to \cite{CWY,CR,HJ,OZ,SY}. For the discrete Kirchhoff equations on graphs, we refer the readers to \cite{CY,L1,PJ,W5,W3}. Very recently, Wang \cite{W2} studied the discrete logarithmic Kirchhoff equations and proved the existence and asymptotic behavior of least energy sign-changing solutions.

Recently, Zhang, Lin and Yang \cite{ZLY} established a discrete version of the fractional Laplace operator $(-\Delta)^s$ through
the heat semigroup on a stochastically complete, connected, locally finite graph. Based on this definition, they obtained the multiplicity solutions to a discrete fractional Schr\"{o}dinger equation. Wang \cite{W7} established the existence and multiplicity of solutions to a discrete fractional Schr\"{o}dinger equation on lattice graphs. Very recently, Wang \cite{W6} considered a fractional logarithmic Schr\"{o}dinger equation and proved the existence of ground state solutions and ground state sign-changing solutions.  However, to the best of our knowledge, there seems no results for discrete fractional 
logarithmic Kirchhoff-type problems on graphs. Motivated by the aforementioned works, in this paper, we will consider the fractional logarithmic Kirchhoff equations on lattice graphs and study the existence of ground state solutions and ground state sign-changing solutions. More precisely, we consider the following fractional logarithmic Kirchhoff equation
\begin{equation}\label{0.2}
\left(a+b \int_{\mathbb{Z}^d}|\nabla^s u|^{2} d \mu\right) (-\Delta)^s u+h(x)u=|u|^{p-2}u \log u^{2}, \quad x\in\mathbb{Z}^d,
\end{equation}
where $a,\,b>0$ and $s\in (0,1)$. Here the fractional Laplace operator is defined as
$$
(-\Delta)^{s} u(x)=\sum_{y \in \mathbb{Z}^d, y \neq x} w_{s}(x, y)(u(x)-u(y)),
$$ 
and 
$$\left|\nabla^{s} u(x)\right|^2=\frac{1}{2} \sum\limits_{y \in \mathbb{Z}^d, y \neq x}w_{s}(x, y)(u(x)-u(y))^{2},$$
where $w_s(x,y)$ is a symmetric positive function satisfying
$$
c_{s,d}|x-y|^{-d-2s}\leq w_s(x,y)\leq C_{s,d}|x-y|^{-d-2s},\quad x\neq y.
$$
We always assume that the potential $h$ satisfies 
\begin{itemize}
    \item[$(h_1)$] for any $x\in\mathbb{Z}^d$, there exists a constant $h_0>0$ such that $h(x) \geq h_0$;
\item[($h_2$)] there exists a point $x_0\in\mathbb{Z}^d$ such that $h(x)\rightarrow\infty$ as $|x-x_0|\rightarrow\infty.$
\end{itemize}

For any $q>p$, it is obvious that 
$$\lim\limits_{t\rightarrow 0}\frac{t^{p-1}\log t^2}{t}=0, \qquad\text{and}\qquad \lim\limits_{t\rightarrow \infty}\frac{t^{p-1}\log t^2}{t^{q-1}}=0,$$
which implies that for any $\varepsilon>0$ , there exists $C_\varepsilon>0$ such that
\begin{equation}\label{1.2}
|t|^{p-1}|\log t^2|\leq \varepsilon |t|+C_\varepsilon |t|^{q-1}, \quad t\neq 0.
\end{equation}

Now we state our main results.

\begin{thm}\label{t1}
Let $p>4$ and $(h_1)$-$(h_2)$ hold. Then the equation (\ref{0.2}) has a ground state solution $u$.
\end{thm}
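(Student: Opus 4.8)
The plan is to obtain solutions of \eqref{0.2} as critical points of the associated energy functional and to produce a mountain-pass critical point of least energy. First I would fix the variational framework: work in the Hilbert space
$$H=\Big\{u:\mathbb{Z}^d\to\mathbb{R}:\ \int_{\mathbb{Z}^d}\big(|\nabla^s u|^2+h(x)u^2\big)\,d\mu<\infty\Big\},\qquad \|u\|^2=a\int_{\mathbb{Z}^d}|\nabla^s u|^2\,d\mu+\int_{\mathbb{Z}^d}h(x)u^2\,d\mu,$$
and, writing $[u]^2=\int_{\mathbb{Z}^d}|\nabla^s u|^2\,d\mu$, set
$$I(u)=\tfrac12\|u\|^2+\tfrac{b}{4}[u]^4-\frac1p\int_{\mathbb{Z}^d}|u|^p\log u^2\,d\mu+\frac{2}{p^2}\int_{\mathbb{Z}^d}|u|^p\,d\mu,$$
the last two terms being the primitive of the nonlinearity $|u|^{p-2}u\log u^2$. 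Using $(h_1)$ and the growth control \eqref{1.2}, I would verify that $I\in C^1(H,\mathbb{R})$ and that its critical points are precisely the weak solutions of \eqref{0.2}.

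Next I would check the mountain-pass geometry. For the behaviour near the origin, applying \eqref{1.2} with a fixed $q>p$ together with the discrete embeddings $H\hookrightarrow\ell^r$ ($r\ge2$) bounds the nonlinear part by $\varepsilon\|u\|^2+C_\varepsilon\|u\|^q+C\|u\|^p$; choosing $\varepsilon$ small and then $\|u\|=\rho$ small gives $I(u)\ge\alpha>0$ on the sphere $\|u\|=\rho$. For the far end, fixing $u_0\neq0$ and letting $t\to\infty$, the dominant negative term of $I(tu_0)$ behaves like $-\tfrac{2t^p\log t}{p}\int_{\mathbb{Z}^d}|u_0|^p\,d\mu$, which outgrows the quartic Kirchhoff term $\tfrac{b}{4}t^4[u_0]^4$; hence $I(tu_0)\to-\infty$ and some $e=t_0u_0$ satisfies $\|e\|>\rho$ and $I(e)<0$.

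The crux is compactness, for which I would establish the Palais--Smale condition. Boundedness of a sequence $(u_n)$ with $I(u_n)\to c$, $I'(u_n)\to0$ follows from the identity
$$I(u_n)-\tfrac1p\langle I'(u_n),u_n\rangle=\Big(\tfrac12-\tfrac1p\Big)\|u_n\|^2+\Big(\tfrac14-\tfrac1p\Big)b[u_n]^4+\tfrac{2}{p^2}\int_{\mathbb{Z}^d}|u_n|^p\,d\mu,$$
in which the logarithmic contributions cancel and, \emph{precisely because $p>4$}, all coefficients on the right are positive; since the left side is $\le c+1+o(1)\|u_n\|$, this forces $\|u_n\|$ to be bounded. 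Passing to a subsequence $u_n\rightharpoonup u$ in $H$, the hypothesis $(h_2)$ becomes decisive: coercivity of $h$ yields a compact embedding $H\hookrightarrow\ell^r$ for every $r\ge2$, so $u_n\to u$ strongly in each $\ell^r$. This compactness lets me pass to the limit both in the nonlocal Kirchhoff coefficient $[u_n]^2$ and in the logarithmic term, via \eqref{1.2} together with a dominated-convergence argument over $\mathbb{Z}^d$, to conclude $u_n\to u$ in $H$ and $I'(u)=0$.

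Finally, the mountain-pass theorem produces a nontrivial critical point $u$ at the level $c=\inf_{\gamma}\max_{t}I(\gamma(t))>0=I(0)$. To upgrade this to a ground state I would introduce the Nehari set $\mathcal N=\{v\in H\setminus\{0\}:\langle I'(v),v\rangle=0\}$ and show $c=\inf_{\mathcal N}I$ by the standard fibering-map comparison (each $v\neq0$ has a path $t\mapsto tv$ with a unique maximum lying on $\mathcal N$); since every nontrivial critical point belongs to $\mathcal N$, the value $c=\inf_{\mathcal N}I$ equals $m=\inf\{I(v):v\neq0,\ I'(v)=0\}$, and $u$ attains it, hence is a ground state. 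I expect the main obstacle to be the limiting argument in the Palais--Smale step: reconciling the nonlocal Kirchhoff term with the non-homogeneous, sign-indefinite logarithmic nonlinearity, where the compact embedding from $(h_2)$ and the growth estimate \eqref{1.2} must be combined carefully.
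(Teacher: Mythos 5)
Your proposal is correct and follows essentially the same route as the paper: mountain-pass geometry via the estimate \eqref{1.2}, boundedness of Palais--Smale sequences from the identity $J_{s,2}(u_k)-\tfrac1p\langle J_{s,2}'(u_k),u_k\rangle$ (where $p>4$ makes all coefficients positive), compactness from the embedding forced by $(h_2)$, and the identification of the mountain-pass level with $\inf_{\mathcal{N}_{s,2}}J_{s,2}$ via the fibering-map argument. The paper's Lemmas 3.1--3.6 carry out exactly these steps.
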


\begin{thm}\label{t2}
  Let $p>6$ and $(h_1)$-$(h_2)$ hold. Then the equation (\ref{0.2}) has a ground state sign-changing solution $v$.   
\end{thm}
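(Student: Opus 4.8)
The plan is to construct a ground state sign-changing solution by minimizing the energy functional $I$ associated with \eqref{0.2} over the sign-changing Nehari-type set
$$
\mathcal{M} = \left\{ u \in H : u^{\pm} \neq 0,\ \langle I'(u), u^{+}\rangle = 0,\ \langle I'(u), u^{-}\rangle = 0 \right\},
$$
where $u^{+} = \max\{u,0\}$, $u^{-} = \min\{u,0\}$, and $H$ is the natural function space on which $I$ is well-defined (the form domain of $(-\Delta)^s + h$, compactly embedded in $\ell^q$ for all $q \geq 2$ thanks to $(h_1)$-$(h_2)$, by the same embedding argument underlying Theorem~\ref{t1}). First I would establish the algebraic lemma that, for each fixed $u$ with $u^{\pm} \neq 0$, the function $(t,\tau) \mapsto I(t u^{+} + \tau u^{-})$ on $(0,\infty)^2$ has a unique critical point, which is its global maximum, and that this point lies in $\mathcal{M}$. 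Here the hypothesis $p > 6$ is essential: expanding $\langle I'(tu^{+}+\tau u^{-}), tu^{+}\rangle = 0$ produces a term $a t^2 \|u^{+}\|^2$, the Kirchhoff coupling terms $b t^4 |\nabla^s u^{+}|_2^4$ and $b t^2\tau^2 (\text{cross term})$, and the right-hand side scaling like $t^p \log(t^2)$ plus lower order; to run the standard monotonicity/uniqueness argument one needs the nonlinear exponent $p$ to strictly dominate the highest Kirchhoff power $4$ after the logarithm is absorbed (via \eqref{1.2}), and the cross term forces one to treat the two equations simultaneously as a system, which is where $p>6$ rather than $p>4$ enters.

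Next I would show $c_{\mathcal{M}} := \inf_{\mathcal{M}} I > 0$ and that a minimizing sequence is bounded: boundedness follows from the Nehari constraints combined with the fact that, for $p>4$, $I(u) - \frac{1}{p}\langle I'(u), u\rangle$ controls $a\|u\|^2$ up to the $\log$ terms, which are in turn controlled by \eqref{1.2} with a small $\varepsilon$; positivity of $c_{\mathcal{M}}$ uses that elements of $\mathcal{M}$ are bounded away from $0$ in norm. Then, using the compact embedding $H \hookrightarrow \ell^q$, a minimizing sequence $(u_n)$ converges (along a subsequence) strongly in every $\ell^q$, $q\geq 2$, to some $u$ with $u^{\pm}_n \to u^{\pm}$ strongly; one checks $u^{\pm} \neq 0$ (again via the norm lower bound on $\mathcal{M}$ and strong convergence), so $u$ has both signs. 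A short argument with the fiber map of Step~1: let $(\bar t, \bar\tau)$ be such that $\bar t u^{+} + \bar\tau u^{-} \in \mathcal{M}$; by weak lower semicontinuity of the quadratic and Kirchhoff parts and strong convergence of the logarithmic part, $I(\bar t u^{+} + \bar\tau u^{-}) \leq \liminf I(\bar t u^{+}_n + \bar\tau u^{-}_n) \leq \liminf I(u^{+}_n + u^{-}_n) = c_{\mathcal{M}}$, forcing $\bar t = \bar\tau = 1$ and $I(u) = c_{\mathcal{M}}$, so the infimum is attained.

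Finally I would verify that the minimizer $v := \bar t u^{+} + \bar\tau u^{-} = u$ is a genuine critical point of $I$, i.e.\ $I'(u) = 0$, via the usual deformation/quantitative-deformation lemma argument: if $I'(u) \neq 0$ there is a direction and a small perturbation, combined with the fiber-map reparametrization from Step~1, producing an element of $\mathcal{M}$ with energy $< c_{\mathcal{M}}$, a contradiction; one must check the perturbation keeps both parts nonzero, which is where the continuity and uniqueness of $(t,\tau)$ from Step~1 is used once more. The main obstacle I anticipate is Step~1 — the two-variable uniqueness/maximum analysis of the fiber map: the nonlocal Kirchhoff term couples $u^{+}$ and $u^{-}$ through $|\nabla^s(tu^{+}+\tau u^{-})|_2^2$, which does \emph{not} split as $t^2|\nabla^s u^{+}|_2^2 + \tau^2|\nabla^s u^{-}|_2^2$ because the cross term $\sum_{x\neq y} w_s(x,y)(u^{+}(x)-u^{+}(y))(u^{-}(x)-u^{-}(y))$ is nonzero and has a definite sign, so the monotonicity arguments familiar from the local Kirchhoff case must be adapted carefully; this sign of the cross term, together with $p>6$, is exactly what makes the system solvable uniquely, and getting that bookkeeping right is the crux of the proof.
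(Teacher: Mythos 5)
Your proposal is correct and follows essentially the same route as the paper: minimization of $J_{s,2}$ over the sign-changing Nehari set $\mathcal{M}_{s,2}$, a two-variable fiber-map analysis (existence and uniqueness of the maximizing pair, handled in the paper via Miranda's theorem together with the sign of the cross term $K(u)\le 0$ and $p>6$), compact embedding to pass to the limit along a minimizing sequence, and a quantitative deformation argument to show the minimizer is a critical point. The only cosmetic difference is that the paper proves the limit itself satisfies $\bar t=\bar\tau=1$ by a strict-inequality argument, whereas you settle for the (equally sufficient) conclusion that the infimum is attained at $\bar t u^{+}+\bar\tau u^{-}$.
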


\begin{thm}\label{t3}
Let $p>6$ and $(h_1)$-$(h_2)$ hold. For the ground state solution $u$ and the ground state sign-changing solution $v$, we have $J_{s,2}(v)>2J_{s,2}(u)$, where $J_{s,2}$ is the functional related to the equation (\ref{0.2}). As a consequence, the equation (\ref{0.2}) has at least four different nontrivial weak solutions. 
\end{thm}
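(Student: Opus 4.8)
The plan is to prove Theorem~\ref{t3} in two stages: first establish the strict energy inequality $J_{s,2}(v) > 2 J_{s,2}(u)$, and then use this gap together with the mountain-pass and Nehari-type structure already set up for Theorems~\ref{t1} and~\ref{t2} to extract four distinct nontrivial critical points. For the energy inequality, write the ground state sign-changing solution as $v = v^+ + v^-$ where $v^\pm$ are the positive and negative parts. The key observation, standard in the Nehari-manifold approach to Kirchhoff problems, is that because of the nonlocal term $b\left(\int_{\mathbb{Z}^d}|\nabla^s u|^2\,d\mu\right)(-\Delta)^s u$, one has a strict inequality
\begin{equation}\label{eq:split}
J_{s,2}(v) > J_{s,2}(v^+) + J_{s,2}(v^-),
\end{equation}
since the cross term $2b\left(\int \nabla^s v^+ \cdot \nabla^s v^-\,d\mu\right)\left(\int |\nabla^s v^+|^2 + |\nabla^s v^-|^2\,d\mu\right)$ contributes a strictly positive quantity (the discrete fractional gradient interaction between $v^+$ and $v^-$ is strictly negative, so it enters the quartic Kirchhoff term with a definite sign that makes the whole split strict). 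One then shows that for each of $v^+$ and $v^-$ there is a unique scaling bringing it onto the Nehari manifold $\mathcal{N}_{s,2}$ (on which $u$ is a minimizer of $J_{s,2}$), using the fibering-map analysis with $p>6$ that guarantees uniqueness and that the Nehari value is the maximum along each ray; hence $J_{s,2}(v^\pm) \geq \min_{\mathcal{N}_{s,2}} J_{s,2} = J_{s,2}(u)$. Combining with \eqref{eq:split} gives $J_{s,2}(v) > J_{s,2}(u) + J_{s,2}(u) = 2 J_{s,2}(u)$.

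Next I would assemble the four solutions. The ground state solution $u$ from Theorem~\ref{t1} is the first; by evenness of the nonlinearity $|u|^{p-2}u\log u^2$ and of the functional $J_{s,2}$, $-u$ is a second critical point with $J_{s,2}(-u) = J_{s,2}(u)$, and $-u \neq u$ since $u$ is a nontrivial ground state (if $u \equiv -u$ then $u\equiv 0$). The ground state sign-changing solution $v$ from Theorem~\ref{t2} is the third. The fourth is $-v$: again by evenness $-v$ is a critical point, it is sign-changing, and $-v \neq v$ since $v$ is nontrivial. It remains to check that these four are pairwise distinct, and the only nonobvious comparisons are $v$ versus $u$ and $v$ versus $-u$: but by the energy inequality just proved, $J_{s,2}(v) > 2 J_{s,2}(u) > J_{s,2}(u) = J_{s,2}(\pm u)$, where the middle inequality uses that $J_{s,2}(u) > 0$ on the Nehari manifold (which follows from the coercivity/mountain-pass geometry established in the proof of Theorem~\ref{t1}, where the mountain-pass level is strictly positive). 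Likewise $-v$ is distinguished from $\pm u$ by the same energy comparison, and from $v$ as noted. Hence all four of $u, -u, v, -v$ are distinct nontrivial weak solutions of \eqref{0.2}.

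The main obstacle I anticipate is justifying the strict inequality \eqref{eq:split} cleanly in the discrete fractional setting: one must verify that the off-diagonal fractional Dirichlet form $\int_{\mathbb{Z}^d} \nabla^s v^+ \cdot \nabla^s v^-\, d\mu = \sum_{x}\sum_{y\neq x} w_s(x,y)\bigl(v^+(x)-v^+(y)\bigr)\bigl(v^-(x)-v^-(y)\bigr)$ is strictly negative (not merely nonpositive) whenever both $v^+$ and $v^-$ are nonzero, exploiting that at any edge $\{x,y\}$ with $v$ changing sign the product $(v^+(x)-v^+(y))(v^-(x)-v^-(y)) \le 0$, and that the strict inequality propagates because $w_s(x,y) > 0$ for all $x \ne y$ — the fractional kernel is fully supported, so some edge always contributes strictly. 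A secondary technical point is confirming the uniqueness of the projection of $v^\pm$ onto $\mathcal{N}_{s,2}$; this is where the hypothesis $p > 6$ is essential, since the fibering map $t \mapsto J_{s,2}(t w)$ for $w \neq 0$ must have exactly one positive critical point which is a strict global maximum on $(0,\infty)$, and the competition among the quadratic term, the quartic Kirchhoff term (degree $4$), and the logarithmic term (which behaves like $t^p \log t$, dominating for $p>6>4$) must be controlled — this should follow from the same monotonicity computation used to set up the Nehari manifold in Theorem~\ref{t2}, which I would simply cite.
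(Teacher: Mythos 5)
There is a genuine gap in the energy inequality. Your chain is $J_{s,2}(v) > J_{s,2}(v^+)+J_{s,2}(v^-)$ followed by $J_{s,2}(v^{\pm}) \geq \inf_{\mathcal{N}_{s,2}} J_{s,2} = J_{s,2}(u)$, but the second step does not follow from the fibering-map analysis you invoke — in fact that analysis gives the bound in the \emph{wrong direction}. The pieces $v^{\pm}$ do not lie on $\mathcal{N}_{s,2}$: since $v\in\mathcal{M}_{s,2}$ one has $\langle J_{s,2}'(v),v^{+}\rangle=0$, but $\langle J_{s,2}'(v^{+}),v^{+}\rangle$ differs from this by the (strictly negative) cross terms coming from $K(v)$ and the Kirchhoff coupling, so $\langle J_{s,2}'(v^{+}),v^{+}\rangle<0$ and $v^{+}\notin\mathcal{N}_{s,2}$. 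The unique $t_{v^{+}}$ with $t_{v^{+}}v^{+}\in\mathcal{N}_{s,2}$ satisfies $J_{s,2}(t_{v^{+}}v^{+})=\sup_{t\geq 0}J_{s,2}(tv^{+})\geq J_{s,2}(v^{+})$ (Corollary \ref{lx}), i.e.\ all you can say is $J_{s,2}(v^{+})\leq J_{s,2}(t_{v^{+}}v^{+})$, which cannot be combined with $J_{s,2}(t_{v^{+}}v^{+})\geq\hat{c}$ to conclude $J_{s,2}(v^{+})\geq\hat{c}$. The paper's proof circumvents exactly this: it first rescales, using Lemma \ref{l3} (the two-parameter maximality of $v$ over $\{ru^{+}+tu^{-}\}$) to get $J_{s,2}(v)\geq J_{s,2}(r_{v}v^{+}+t_{v}v^{-})$ for the specific $(r_{v},t_{v})$ that place $r_{v}v^{+}$ and $t_{v}v^{-}$ on $\mathcal{N}_{s,2}$, and only then splits, so that the resulting pieces satisfy $J_{s,2}(r_{v}v^{+}),\,J_{s,2}(t_{v}v^{-})\geq\hat{c}$. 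Your argument is missing this rescale-before-splitting step, and without it the inequality $J_{s,2}(v)>2J_{s,2}(u)$ is not established.

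A secondary point: you assert that the off-diagonal form $\int_{\mathbb{Z}^d}\nabla^{s}v^{+}\nabla^{s}v^{-}\,d\mu$ is strictly negative. It is in fact strictly \emph{positive} when $v^{\pm}\neq 0$: by the computation in Lemma \ref{i1}, $\bigl(v^{+}(x)-v^{+}(y)\bigr)\bigl(v^{-}(x)-v^{-}(y)\bigr)=-v^{+}(x)v^{-}(y)-v^{+}(y)v^{-}(x)\geq 0$, and the fully supported kernel $w_{s}(x,y)>0$ makes the sum strictly positive. This is the sign that actually makes both the quadratic and quartic terms split strictly (equivalently $K(v)<0$ in the paper's notation); with the sign you state, the quadratic term $\tfrac12\|v\|_{H^{s,2}}^{2}$ would decrease under splitting and the inequality could fail. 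Your conclusion about the split is right, but for the opposite reason to the one you give. The multiplicity part of your argument ($\pm u$, $\pm v$ distinguished by energy level and by sign) matches the paper and is fine.
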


\
\

This paper is organized as follows. In Section 2, we state some basic results on graphs. In Section 3, we prove the existence of ground state solutions (Theorem \ref{t1}). In Section 4, we first prove the existence of ground state sign-changing solutions (Theorem \ref{t2}). Then we prove the multiplicity of nontrivial weak solutions (Theorem \ref{t3}).  

\section{Preliminaries}
In this section, we introduce the basic settings on graphs and give some preliminary results. 

Let $G=(V, E)$ be a connected, locally finite graph, where $V$ denotes the vertex set and $E$ denotes the edge set. We call vertices $x$ and $y$ neighbors, denoted by $x \sim y$, if there exists an edge connecting them, i.e. $(x, y) \in E$. For any $x,y\in V$, the distance $d(x,y)$ is defined as the minimum number of edges connecting $x$ and $y$, namely
$$d(x,y)=\inf\{k:x=x_0\sim\cdots\sim x_k=y\}.$$

In this paper, we consider, the natural discrete model of the Euclidean space, the integer lattice graph.  The $d$-dimensional integer lattice graph, denoted by $\mathbb{Z}^d$, consists of the set of vertices $V=\mathbb{Z}^d$ and the set of edges $E=\{(x,y): x,\,y\in~V,\,\underset {{i=1}}{\overset{d}{\sum}}|x_{i}-y_{i}|=1\}.$
We will always denote $|x-y|:=d(x,y)$ on the lattice graph $V$.

Let $C(\mathbb{Z}^d)$ be the set of all functions on $\mathbb{Z}^d$. For $u,v \in C(\mathbb{Z}^d)$ and $s\in (0,1)$, as in \cite{ZLY}, we define the fractional gradient form as
\begin{equation*}
\nabla^{s} u \nabla^{s} v(x)=\frac{1}{2} \sum_{y \in \mathbb{Z}^d, y \neq x} w_{s}(x, y)\left(u(x)-u(y)\right)\left(v(x)-v(y)\right),
\end{equation*}
where $w_s(x,y)$ is a symmetric positive function satisfying
\begin{equation}\label{25}
c_{s,d}|x-y|^{-d-2s}\leq w_s(x,y)\leq C_{s,d}|x-y|^{-d-2s},\quad x\neq y.    
\end{equation}
This order estimate can be seen in \cite[Theorem 2.4]{W0}. Moreover, we write the length of $\nabla^{s} u(x)$ as
$$
\left|\nabla^{s} u\right|(x)=\sqrt{\nabla^{s} u \nabla^{s} u(x)}=\left(\frac{1}{2} \sum_{y \in \mathbb{Z}^d, y \neq x} w_{s}(x, y)(u(x)-u(y))^{2}\right)^{\frac{1}{2}} .
$$
Let the fractional Laplacian of $u\in  C(\mathbb{Z}^d)$ be defined by
$$
(-\Delta)^{s} u(x)= \sum_{y \in \mathbb{Z}^d, y \neq x} w_{s}(x, y)(u(x)-u(y)).
$$ 

The space $\ell^{p}(\mathbb{Z}^d)$ is defined as $
\ell^{p}(\mathbb{Z}^d)=\left\{u \in C(\mathbb{Z}^d):\|u\|_{p}<\infty\right\},
$ where
$$
\|u\|_{p}= \begin{cases}\left(\sum\limits_{x \in \mathbb{Z}^d}|u(x)|^{p}\right)^{\frac{1}{p}}, &  1 \leq p<\infty, \\ \sup\limits_{x \in \mathbb{Z}^d}|u(x)|, & p=\infty.\end{cases}
$$
For convenience, for $u\in C(\mathbb{Z}^{d})$, we always write $
\int_{\mathbb{Z}^{d}} u(x)\,d \mu=\sum\limits_{x \in \mathbb{Z}^{d}} u(x),
$
where $\mu$ is the counting measure on $\mathbb{Z}^{d}$.

Let $C_{c}(\mathbb{Z}^d)$ be the set of all functions on $\mathbb{Z}^d$ with finite support. Now we state a fractional Sobolev space. For $s\in(0,1)$, let $W^{s,2}(\mathbb{Z}^d)$ be the completion of $C_c(\mathbb{Z}^d)$ under
the norm
$$
\|u\|_{W^{s,2}}=\left(\int_{\mathbb{Z}^d}\left(|\nabla^s u|^{2}+u^{2}\right) d \mu\right)^{\frac{1}{2}}.
$$
It is clear that $W^{s,2}(\mathbb{Z}^d)$ is a Hilbert space with the inner product
$$
\langle u, v\rangle_{W^{s,2}}=\int_{\mathbb{Z}^d}\left(\nabla^su \nabla^sv+u v\right)\,d \mu.
$$
 In order to establish our results, we introduce a subspace of $W^{s,2}(\mathbb{Z}^d)$, namely
$$
H^{s,2}(\mathbb{Z}^d)=\left\{u \in W^{s,2}(\mathbb{Z}^d): \int_{\mathbb{Z}^d} h(x) u^{2}\,d \mu<\infty\right\}
$$
equipped with the norm
$$
\|u\|_{H^{s,2}}=\left(\int_{\mathbb{Z}^d}\left(a|\nabla^s u|^{2}+h(x)u^{2}\right)\,d \mu\right)^{\frac{1}{2}}.
$$
Since $C_c(\mathbb{Z}^d)\subset H^{s,2}(\mathbb{Z}^d)$, the space $H^{s,2}(\mathbb{Z}^d)\neq\emptyset.$ Moreover, we have that $H^{s,2}(\mathbb{Z}^d)$ ($H^{s,2}$ for brevity) is also a Hilbert space with the inner product
$$
\langle u, v\rangle_{H^{s,2}}=\int_{\mathbb{Z}^d}\left(a\nabla^s u \nabla^s v+h(x) u v\right)\,d \mu.
$$ By $(h_1)$, we have that $$\int_{\mathbb{Z}^d}u^2\,d\mu\leq \frac{1}{h_0}\int_{\mathbb{Z}^d} h(x)u^2\,d\mu,$$
and hence, for $q\geq 2$,
\begin{equation}\label{ac}
    \|u\|_q\leq \|u\|_2\leq \|u\|_{W^{s,2}}\leq C_{a,h_0}\|u\|_{H^{s,2}}.
\end{equation}

\ 
\

The energy functional $J_{s,2}: H^{s,2}\rightarrow \mathbb{R}$ related to the equation (\ref{0.2}) is given by
\begin{equation*}
J_{s,2}(u)=\frac{1}{2} \int_{\mathbb{Z}^d}\left(a|\nabla^s u|^{2}+h(x) u^{2}\right) d \mu+\frac{b}{4}\left(\int_{\mathbb{Z}^d}|\nabla^s u|^{2} d \mu\right)^{2}+\frac{2}{p^2}\int_{\mathbb{Z}^d}|u|^p\,d\mu-\frac{1}{p} \int_{\mathbb{Z}^d} |u|^{p} \log u^{2} d \mu. 
\end{equation*}
By (\ref{1.2}), we derive that $J_{s,2}\in C^1(H^{s,2},\mathbb{R})$ and, for any $\phi\in H^{s,2}$,
\begin{eqnarray*}
\langle J'_{s,2}(u),\phi\rangle=\int_{\mathbb{Z}^d}\left(a \nabla^s u \nabla^s \phi+h(x) u \phi\right)\,d \mu+b \int_{\mathbb{Z}^d}|\nabla^s u|^{2}\,d \mu \int_{\mathbb{Z}^d} \nabla^s u \nabla^s \phi\,d \mu-\int_{\mathbb{Z}^d}|u|^{p-2}u\phi\log u^2\,d\mu.
\end{eqnarray*}

\begin{df}
We say $u \in H^{s,2}$ is a nontrivial weak solution to the equation (\ref{0.2}) if $u\neq 0$ is a critical point of the functional $J_{s,2}$. 
We say that $u\in H^{s,2}$ is a ground state solution to the equation (\ref{0.2}) if
$u$ is a nontrivial solution satisfying
$$
J_{s,2}(u)=\inf\limits_{v\in \mathcal{N}_{s,2}} J_{s,2}(v),
$$
where $\mathcal{N}_{s,2}=\{v\in H^{s,2}\backslash\{0\}: \langle J_{s,2}'(v),v\rangle=0\}$ is the Nehari manifold. 

We say that $u\in H^{s,2}$ is a sign-changing solution to the equation (\ref{0.2}) if $u$ is a weak solution to the equation (\ref{0.2}) and $u^{\pm}\not\equiv 0$, where $u^{+}=\max \{u, 0\}$ and $u^{-}=\min \{u, 0\}$. We say that $u\in H^{s,2}$ is a ground state sign-changing solution to the equation (\ref{0.2}) if $u$ is a sign-changing solution satisfying
$$
J_{s,2}(u)=\inf\limits_{v\in \mathcal{M}_{s,2}} J_{s,2}(v),
$$
where $\mathcal{M}_{s,2}=\left\{v \in H^{s,2}: v^{ \pm} \neq 0 \text { and } \langle J_{s,2}^{\prime}(v), v^{+}\rangle=\langle J_{s,2}^{\prime}(v),v^{-}\rangle=0\right\}$ is the sign-changing Nehari set.
\end{df}

Now we give some basic lemmas in this paper. The first one is about the formulas of integration by parts, see \cite{ZLY}.

\begin{lm}\label{l0} 
Let $u \in W^{s,2}(\mathbb{Z}^d)$. Then for any $\phi \in C_{c}(\mathbb{Z}^d)$, we have
$$
\int_{\mathbb{Z}^d} \phi(-\Delta)^{s} u\,d \mu=\int_{\mathbb{Z}^d}\nabla^{s} u \nabla^{s} \phi\,d \mu.
$$
\end{lm}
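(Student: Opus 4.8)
The plan is to establish the identity by a direct symmetrization of the two defining double sums, the only genuine work being the justification that the interchanges of summation are legitimate. Writing both sides out, the left-hand side is
\[
\int_{\mathbb{Z}^d}\phi(-\Delta)^su\,d\mu=\sum_{x\in\mathbb{Z}^d}\sum_{y\neq x}\phi(x)\,w_s(x,y)\big(u(x)-u(y)\big),
\]
while by the definition of the fractional gradient form the right-hand side is
\[
\int_{\mathbb{Z}^d}\nabla^su\,\nabla^s\phi\,d\mu=\frac12\sum_{x\in\mathbb{Z}^d}\sum_{y\neq x}w_s(x,y)\big(u(x)-u(y)\big)\big(\phi(x)-\phi(y)\big).
\]

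First I would verify absolute convergence, so that these double series may be freely rearranged. Since $\phi\in C_c(\mathbb{Z}^d)$ has finite support, it suffices to control the inner sum $\sum_{y\neq x}w_s(x,y)|u(x)-u(y)|$ for each fixed $x$. By Cauchy--Schwarz and the definition of $|\nabla^su|$,
\[
\sum_{y\neq x}w_s(x,y)|u(x)-u(y)|\leq\Big(\sum_{y\neq x}w_s(x,y)\Big)^{1/2}\big(2|\nabla^su|^2(x)\big)^{1/2},
\]
and the estimate (\ref{25}) gives $\sum_{y\neq x}w_s(x,y)\leq C_{s,d}\sum_{y\neq x}|x-y|^{-d-2s}<\infty$, since on the lattice the number of vertices at distance $r$ grows like $r^{d-1}$ while the summand decays like $r^{-d-2s}$, leaving the convergent series $\sum_r r^{-1-2s}$. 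As $u\in W^{s,2}(\mathbb{Z}^d)$ guarantees $|\nabla^su|^2(x)<\infty$, the inner sum is finite for every $x$; summing over the finitely many $x\in\mathrm{supp}\,\phi$ shows the left-hand double sum converges absolutely, and bounding $|\phi(x)-\phi(y)|\leq|\phi(x)|+|\phi(y)|$ reduces the right-hand double sum to two series of the same type, so it too is absolutely convergent.

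With absolute convergence in hand I would expand the right-hand side as
\[
\frac12\sum_x\sum_{y\neq x}w_s(x,y)\big(u(x)-u(y)\big)\phi(x)-\frac12\sum_x\sum_{y\neq x}w_s(x,y)\big(u(x)-u(y)\big)\phi(y),
\]
and relabel the summation indices $x\leftrightarrow y$ in the second double sum. Using the symmetry $w_s(x,y)=w_s(y,x)$ together with $u(y)-u(x)=-(u(x)-u(y))$, the second sum becomes identical to the first, so the two halves coincide and their total equals $\sum_x\sum_{y\neq x}w_s(x,y)(u(x)-u(y))\phi(x)$, which is exactly the left-hand side computed above. The algebraic manipulation is routine; the step that genuinely requires care, and which I expect to be the main obstacle, is the convergence bookkeeping that licenses the index swap — specifically, checking that the kernel estimate (\ref{25}) combined with $u\in W^{s,2}$ makes $(-\Delta)^su(x)$ pointwise well defined and the associated double series absolutely summable once $\phi$ is compactly supported.
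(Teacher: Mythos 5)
Your proof is correct. The paper itself does not supply an argument for this lemma --- it simply cites \cite{ZLY} --- and the symmetrization you carry out (expand both double sums, relabel $x\leftrightarrow y$ in the cross term using the symmetry of $w_s$) is exactly the standard argument that reference uses. Your convergence bookkeeping is also sound: the Cauchy--Schwarz bound $\sum_{y\neq x}w_s(x,y)|u(x)-u(y)|\le\bigl(\sum_{y\neq x}w_s(x,y)\bigr)^{1/2}\bigl(2|\nabla^su|^2(x)\bigr)^{1/2}$ together with the summability of $|x-y|^{-d-2s}$ over the lattice and the finite support of $\phi$ is precisely what licenses the rearrangement, so nothing is missing.
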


\begin{lm}\label{li}
  If $u \in H^{s,2}$ is a weak solution to the equation (\ref{0.2}), then $u$ is a pointwise solution to the equation (\ref{0.2}).  
\end{lm}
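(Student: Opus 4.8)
The plan is to unwind the definition of a weak solution by testing against the Dirac-type functions $\delta_z$ for each fixed $z\in\mathbb{Z}^d$, and to show that each such test function lies in $H^{s,2}$, so that it is an admissible test function. First I would observe that for any fixed $z\in\mathbb{Z}^d$, the function $\delta_z\in C_c(\mathbb{Z}^d)$ (equal to $1$ at $z$ and $0$ elsewhere) belongs to $H^{s,2}$: it has finite support, so $\int_{\mathbb{Z}^d}h(x)\delta_z^2\,d\mu=h(z)<\infty$, and $|\nabla^s\delta_z|^2(x)$ is finite at every $x$ and summable in $x$ because of the bound $w_s(x,y)\leq C_{s,d}|x-y|^{-d-2s}$ together with $2s>0$ (so the tails $\sum_{x}|x-z|^{-d-2s}$ converge). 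Hence $\delta_z$ is a legitimate element of $H^{s,2}$ and may be used as $\phi$ in the weak formulation.

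Next I would substitute $\phi=\delta_z$ into the identity $\langle J'_{s,2}(u),\phi\rangle=0$. Using Lemma \ref{l0} (integration by parts) we have $\int_{\mathbb{Z}^d}\nabla^s u\,\nabla^s\delta_z\,d\mu=\int_{\mathbb{Z}^d}\delta_z\,(-\Delta)^s u\,d\mu=(-\Delta)^s u(z)$, and likewise $\int_{\mathbb{Z}^d}a\nabla^s u\,\nabla^s\delta_z+h(x)u\delta_z\,d\mu = a(-\Delta)^s u(z)+h(z)u(z)$. The Kirchhoff term contributes $b\big(\int_{\mathbb{Z}^d}|\nabla^s u|^2\,d\mu\big)(-\Delta)^s u(z)$, and the right-hand nonlinear term contributes $|u(z)|^{p-2}u(z)\log u(z)^2$. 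Collecting these, $\langle J'_{s,2}(u),\delta_z\rangle=0$ becomes exactly
$$
\left(a+b\int_{\mathbb{Z}^d}|\nabla^s u|^2\,d\mu\right)(-\Delta)^s u(z)+h(z)u(z)=|u(z)|^{p-2}u(z)\log u(z)^2,
$$
which is the pointwise equation (\ref{0.2}) evaluated at $z$. Since $z\in\mathbb{Z}^d$ was arbitrary, $u$ solves (\ref{0.2}) pointwise everywhere.

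The only genuinely non-routine point is the verification that $\delta_z\in H^{s,2}$, i.e. that the fractional gradient of a finitely supported function is itself $\ell^2$-summable; this is where the polynomial decay $w_s(x,y)\leq C_{s,d}|x-y|^{-d-2s}$ with $s>0$ is essential, and I would spell this tail estimate out. Everything else — the applicability of Lemma \ref{l0}, which needs $u\in W^{s,2}(\mathbb{Z}^d)$ (guaranteed by $u\in H^{s,2}$ and (\ref{ac})) and $\phi=\delta_z\in C_c(\mathbb{Z}^d)$, and the fact that all the sums defining $\langle J'_{s,2}(u),\delta_z\rangle$ are finite — follows directly from the hypotheses and the growth bound (\ref{1.2}) already recorded in the excerpt. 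One should also note that the finiteness of $\int_{\mathbb{Z}^d}|\nabla^s u|^2\,d\mu$, used to make sense of the Kirchhoff coefficient pointwise, is immediate from $u\in H^{s,2}$.
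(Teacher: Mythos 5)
Your proposal is correct and follows essentially the same route as the paper: test the weak formulation against the Dirac function at an arbitrary vertex, use the integration-by-parts identity of Lemma \ref{l0}, and read off the pointwise equation. The only cosmetic difference is that you verify $\delta_z\in H^{s,2}$ directly via the tail estimate on $w_s$, whereas the paper simply uses the already-recorded inclusion $C_c(\mathbb{Z}^d)\subset H^{s,2}$ together with density.
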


\begin{proof}
If $u \in H^{s,2}$ is a weak solution to the equation (\ref{0.2}), then for any $\phi \in H^{s,2}$, there holds
$$
\int_{\mathbb{Z}^d}(a \nabla^s u \nabla^s \phi+h(x) u \phi)\,d \mu+b \int_{\mathbb{Z}^d}|\nabla^s u|^{2}\,d \mu \int_{\mathbb{Z}^d} \nabla^s u \nabla^s \phi\,d \mu=\int_{\mathbb{Z}^d}|u|^{p-2}u\phi\log u^2\,d\mu.
$$
Since $C_{c}(\mathbb{Z}^d)$ is dense in $H^{s,2}$, for any $\phi \in C_{c}(\mathbb{Z}^d)$, by integration by parts, we have
\begin{equation}\label{2.1}
\left(a+b \int_{\mathbb{Z}^d}|\nabla^s u|^{2} d \mu\right) \int_{\mathbb{Z}^d}(-\Delta)^s u\,\phi\,d\mu+\int_{\mathbb{Z}^d}h(x) u \phi\,d \mu=\int_{\mathbb{Z}^d}|u|^{p-2}u\phi \log u^{2} d \mu.
\end{equation}
For any fixed $x_0\in \mathbb{Z}^d$, let
$$
\phi_0(x)= \begin{cases}1, & x=x_0 \\ 0, & x \neq x_0.\end{cases}
$$
Clearly, $\phi_0 \in C_c(\mathbb{Z}^d)$. By taking $\phi_0$ as a test function in (\ref{2.1}), we derive that
$$\left(a+b \int_{\mathbb{Z}^d}|\nabla^s u(x)|^{2} d \mu\right)(-\Delta)^s u(x_0)+ h (x_0)        u(x_0)=|u(x_0)|^{p-2}u(x_0)\log u^{2}(x_0).$$  We get that $u$ is a pointwise solution to the equation (\ref{0.2}) by the arbitrariness of $x_0$.

\end{proof}

\begin{lm}\label{i1}
Let $u\in H^{s,2}$. Then we have $u^-,u^+, |u|\in H^{s,2}$.   
\end{lm}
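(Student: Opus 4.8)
The plan is to prove the three memberships simultaneously by isolating the one structural property they share. Writing $T$ for any of the maps $t\mapsto t^{+}=\max\{t,0\}$, $t\mapsto t^{-}=\min\{t,0\}$, $t\mapsto|t|$, the unifying observation is that each $T$ is a normal contraction: $T(0)=0$ and $|T(\alpha)-T(\beta)|\le|\alpha-\beta|$ for all $\alpha,\beta\in\mathbb{R}$. So I would reduce everything to showing that $Tu\in H^{s,2}$ for an arbitrary normal contraction $T$ and then specialize to $(\cdot)^{+}$, $(\cdot)^{-}$ and $|\cdot|$.

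The crux is a pointwise domination of both energy densities. Applying the contraction property to the pair of values $u(x)$ and $u(y)$ gives $(Tu(x)-Tu(y))^{2}\le(u(x)-u(y))^{2}$ for all $x\neq y$, whence, from the definition of the fractional gradient and the positivity of $w_{s}$,
$$|\nabla^{s}(Tu)|^{2}(x)=\frac{1}{2}\sum_{y\neq x}w_{s}(x,y)(Tu(x)-Tu(y))^{2}\le|\nabla^{s}u|^{2}(x),\qquad x\in\mathbb{Z}^d.$$
Similarly $T(0)=0$ yields $|Tu(x)|\le|u(x)|$, so $h(x)(Tu)^{2}(x)\le h(x)u^{2}(x)$. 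Summing over $\mathbb{Z}^d$ gives $\|Tu\|_{H^{s,2}}\le\|u\|_{H^{s,2}}<\infty$; in particular both $\int_{\mathbb{Z}^d}|\nabla^{s}(Tu)|^{2}\,d\mu$ and $\int_{\mathbb{Z}^d}h(x)(Tu)^{2}\,d\mu$ are finite.

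It remains to place $Tu$ genuinely inside the completion $H^{s,2}$, since finiteness of the norm is not by definition membership. I would take $u_{n}\in C_{c}(\mathbb{Z}^d)$ with $u_{n}\to u$ in $H^{s,2}$. Each $Tu_{n}$ again lies in $C_{c}(\mathbb{Z}^d)$ (its support sits inside that of $u_{n}$ because $T(0)=0$), and the contraction estimate applied to $u_{n}$ shows $\{Tu_{n}\}$ is bounded in the Hilbert space $H^{s,2}$. After passing to a subsequence, $Tu_{n}\rightharpoonup w$ weakly in $H^{s,2}$ for some $w\in H^{s,2}$. Since the embedding $H^{s,2}\hookrightarrow\ell^{2}(\mathbb{Z}^d)$ of (\ref{ac}) is continuous, weak convergence in $H^{s,2}$ forces weak convergence in $\ell^{2}$, and testing against $\delta_{x}\in\ell^{2}$ gives $Tu_{n}(x)\to w(x)$ for each $x$. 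On the other hand $u_{n}\to u$ in $H^{s,2}$ implies $u_{n}\to u$ in $\ell^{2}$, hence pointwise, so continuity of $T$ gives $Tu_{n}(x)\to Tu(x)$ for each $x$. Comparing the two limits yields $w=Tu$, so $Tu\in H^{s,2}$, and specializing $T$ finishes the proof.

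The main obstacle is precisely the nonlocality of $(-\Delta)^{s}$: unlike a nearest-neighbour gradient, $|\nabla^{s}u|^{2}(x)$ couples $x$ to all of $\mathbb{Z}^d$, so one cannot argue edge by edge or by naive restriction. What rescues the argument is that the contraction inequality holds termwise inside the double sum, independently of the kernel, so the pointwise domination survives summation. I would remark that an alternative to the weak-compactness step is to show directly that $C_{c}(\mathbb{Z}^d)$ is dense among finite-norm functions by truncating $u$ to balls; this uses the uniform summability $\sum_{y\neq x}w_{s}(x,y)\le C_{s,d}\sum_{z\neq 0}|z|^{-d-2s}<\infty$ coming from (\ref{25}) to control the boundary interaction terms. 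Either route reduces the lemma to the single pointwise contraction estimate, which is the real content.
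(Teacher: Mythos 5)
Your proof is correct, but it takes a genuinely different route from the paper's. The paper works only with the specific decomposition $u=u^{+}+u^{-}$, $u^{+}u^{-}=0$: it expands $|\nabla^{s}u|^{2}=|\nabla^{s}u^{+}|^{2}+|\nabla^{s}u^{-}|^{2}+2\nabla^{s}u^{+}\nabla^{s}u^{-}$ and checks that the cross term $\nabla^{s}u^{+}\nabla^{s}u^{-}(x)=-\frac12\sum_{y\neq x}w_{s}(x,y)\bigl(u^{+}(x)u^{-}(y)+u^{+}(y)u^{-}(x)\bigr)$ is nonnegative, giving $|\nabla^{s}u^{\pm}|^{2}\le|\nabla^{s}u|^{2}$ pointwise; it then gets $|u|=u^{+}-u^{-}$ for free by linearity. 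You instead observe that $(\cdot)^{+}$, $(\cdot)^{-}$ and $|\cdot|$ are all normal contractions and deduce the same pointwise domination $|\nabla^{s}(Tu)|^{2}\le|\nabla^{s}u|^{2}$ from the termwise inequality $(Tu(x)-Tu(y))^{2}\le(u(x)-u(y))^{2}$, treating all three cases at once. Your argument is more general (it covers any $1$-Lipschitz $T$ with $T(0)=0$) and shorter on the energy estimate; the paper's expansion, by contrast, produces the exact cross-term identity that it needs again later (the quantity $K(u)$ in Proposition 2.7), so the extra computation is not wasted there. You are also more careful than the paper on one point: since $W^{s,2}$ is defined as a completion of $C_{c}(\mathbb{Z}^d)$, finiteness of $\|Tu\|_{H^{s,2}}$ is not literally membership, and your approximation-plus-weak-limit step (or the truncation alternative you sketch) closes that gap, which the paper's proof passes over silently. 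The only caveat is that your density and weak-compactness step leans on $C_{c}(\mathbb{Z}^d)$ being dense in $H^{s,2}$, which the paper asserts elsewhere (in the proof of Lemma 2.3) but does not prove; that is an assumption you share with the paper rather than an additional gap.
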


\begin{proof}
Note that $u=u^{+}+u^{-}$ and  $u^{+} u^{-}=0$. A direct calculation yields that
\begin{align*}\label{ar}
\nabla^{s} u^{+} \nabla^{s} u^{-}(x) & =\frac{1}{2} \sum_{y\in\mathbb{Z}^d, y \neq x} w_{s}(x, y)\left(u^{+}(x)-u^{+}(y)\right)\left(u^{-}(x)-u^{-}(y)\right) \nonumber\\
& =-\frac{1}{2} \sum_{y\in\mathbb{Z}^d, y \neq x} w_{s}(x, y)\left(u^{+}(x) u^{-}(y)+u^{+}(y) u^{-}(x)\right) \\
& \geq 0 \nonumber.
\end{align*}
Therefore, we get that
\begin{align*}
 |\nabla^s u|^{2}&=\frac{1}{2} \sum_{y\in\mathbb{Z}^d, y \neq x} w_{s}(x, y)\left(u(x)-u(y)\right)^2 \\
&=\frac{1}{2} \sum_{y\in\mathbb{Z}^d, y \neq x} w_{s}(x, y)\left(u^+(x)-u^+(y)+u^-(x)-u^-(y)\right)^2 \\
&=\frac{1}{2} \sum_{y\in\mathbb{Z}^d, y \neq x} w_{s}(x, y)\left[(u^+(x)-u^+(y))^2+(u^-(x)-u^-(y))^2+2(u^+(x)-u^+(y))(u^-(x)-u^-(y))\right] \\
&= \left|\nabla^s u^{+}\right|^{2}+\left|\nabla^s u^{-}\right|^{2}+2 \nabla^s u^{+} \nabla ^su^{-} \\ &\geq\left|\nabla^s u^{-}\right|^{2}.   
\end{align*}
Moreover, we have $\left|u^{-}\right|^{2} \leq|u|^{2}$. Then for $u \in H^{s,2}$,  we obtain that $u^{-} \in H^{s,2}$.
A similar argument tells us that $u^+\in H^{s,2}$. Since $H^{s,2}$ is a Hilbert space and $|u|=u^+-u^-$, we get that $|u|\in H^{s,2}.$
\end{proof}

The following one is a compactness result related to the space $H^{s,2}$.

\begin{lm}\label{lg}
Let $(h_1)$-$(h_2)$ hold. Then the space $H^{s,2}$ is embedded compactly into $\ell^{q}(\mathbb{Z}^d)$ for $q\geq 2$. That is, for any bounded sequence $\left\{u_{k}\right\} \subset H^{s,2}$, there exists $u \in H^{s,2}$ such that, up to a subsequence, 
 $$
\begin{cases}u_k \rightharpoonup u, & \text { weakly in } H^{s,2}, \\ u_k \rightarrow u, & \text { pointwise in } \mathbb{Z}^d, \\ u_k \rightarrow u, & \text { strongly in } \ell^{q}(\mathbb{Z}^d),~q \in[2,\infty].\end{cases}
$$
\end{lm}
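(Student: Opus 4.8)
The plan is to establish the three convergence statements in turn, with the compact embedding into $\ell^2(\mathbb{Z}^d)$ being the crux. First, since $\{u_k\}$ is bounded in the Hilbert space $H^{s,2}$, the Banach--Alaoglu theorem gives a subsequence (not relabeled) and $u\in H^{s,2}$ with $u_k\rightharpoonup u$ weakly in $H^{s,2}$. By (\ref{ac}) the inclusion $H^{s,2}\hookrightarrow \ell^2(\mathbb{Z}^d)\hookrightarrow\ell^\infty(\mathbb{Z}^d)$ is continuous, so $\{u_k\}$ is bounded in $\ell^\infty$ and the evaluation functional $v\mapsto v(x)$ is continuous on $H^{s,2}$ for each fixed $x$; hence weak convergence forces $u_k(x)\to u(x)$ for every $x\in\mathbb{Z}^d$, i.e. pointwise convergence.

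The heart of the argument is the strong convergence in $\ell^2(\mathbb{Z}^d)$, which I would obtain from $(h_2)$ by a tightness (uniform decay at infinity) argument. Let $M:=\sup_k\|u_k\|_{H^{s,2}}^2<\infty$. Fix $\varepsilon>0$. By $(h_2)$ there is $R>0$ such that $h(x)\geq M/\varepsilon$ whenever $|x-x_0|\geq R$; consequently, for every $k$,
$$
\sum_{|x-x_0|\geq R}|u_k(x)|^2\leq \frac{\varepsilon}{M}\sum_{|x-x_0|\geq R}h(x)|u_k(x)|^2\leq \frac{\varepsilon}{M}\,\|u_k\|_{H^{s,2}}^2\leq \varepsilon,
$$
and the same bound holds for the limit $u$ by Fatou's lemma applied to the pointwise convergence. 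On the finite ball $B_R:=\{x:|x-x_0|<R\}$ there are only finitely many vertices, so pointwise convergence yields $\sum_{x\in B_R}|u_k(x)-u(x)|^2\to 0$ as $k\to\infty$. Splitting $\|u_k-u\|_2^2$ into the sum over $B_R$ and its complement and using the triangle inequality on the tail, we get $\limsup_{k\to\infty}\|u_k-u\|_2^2\leq 4\varepsilon$; since $\varepsilon$ was arbitrary, $u_k\to u$ in $\ell^2(\mathbb{Z}^d)$.

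Finally, the case $q\in(2,\infty]$ follows by interpolation from the $\ell^2$ convergence together with the uniform $\ell^\infty$ bound: for $2<q<\infty$,
$$
\|u_k-u\|_q^q=\sum_{x\in\mathbb{Z}^d}|u_k(x)-u(x)|^{q-2}|u_k(x)-u(x)|^2\leq \|u_k-u\|_\infty^{q-2}\,\|u_k-u\|_2^2\leq (2C)^{q-2}\|u_k-u\|_2^2\to 0,
$$
where $C$ bounds all $\|u_k\|_\infty,\|u\|_\infty$; and for $q=\infty$ one has $\|u_k-u\|_\infty\leq\|u_k-u\|_2\to 0$ directly from (\ref{ac}). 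I expect the only delicate point to be the tightness step, i.e. correctly exploiting $(h_2)$ to control the mass of $u_k$ outside a large ball uniformly in $k$; once that is in hand, the remaining assertions are routine. \qed
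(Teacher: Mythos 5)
Your proof is correct, and it follows the standard route that the paper itself only gestures at (the paper omits the argument, deferring to \cite{ZLY}): weak compactness in the Hilbert space, pointwise convergence via the continuity of evaluation functionals, uniform tightness from the coercivity of $h$ in $(h_2)$, and interpolation for $q>2$. Your writeup in fact supplies the details the paper leaves out, and the tightness step is handled correctly.
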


\begin{proof}
    The proof is similar to that of \cite{ZLY}. We omit it here.
\end{proof}

\begin{prop}\label{o} Let $r,t>0$. Then for any $u\in H^{s,2}$, we have
  \begin{itemize}
      \item[(i)] $$
      \int_{\mathbb{Z}^d} \left|\nabla^s(ru^{+}+tu^{-})\right|^2 d \mu=r^2\int_{\mathbb{Z}^d} \left|\nabla^s u^{+}\right|^2 d \mu+t^2\int_{\mathbb{Z}^d}\left|\nabla^s u^{-}\right|^2 d \mu-rtK(u),
      $$
 where $K(u)=\sum\limits_{x \in \mathbb{Z}^d} \sum\limits_{y\in \mathbb{Z}^d,y\neq x}w_{x}(x,y)\left[u^{+}(y) u^{-}(x)+u^{-}(y) u^{+}(x)\right]\leq 0;$    
 \item[(ii)] $$
 \int_{\mathbb{Z}^d} \nabla^s\left(ru^{+}+tu^{-}\right)\nabla^s (ru^+)\, d \mu=r^2\int_{\mathbb{Z}^d} \left|\nabla^s u^{+}\right|^2 d \mu-\frac{rt}{2} K(u);  
 $$
 \item[(iii)] $$
 \int_{\mathbb{Z}^d} \nabla^s\left(ru^{+}+tu^{-}\right)\nabla^s (tu^-)\, d \mu=t^2\int_{\mathbb{Z}^d} \left|\nabla^s u^{-}\right|^2 d \mu-\frac{rt}{2} K(u).
$$
  \end{itemize}  
\end{prop}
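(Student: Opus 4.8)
The plan is to expand everything in terms of the bilinear form $\nabla^s v\,\nabla^s w(x)$, which is symmetric and bilinear in $(v,w)$, and then collect terms using the orthogonality-type identities forced by $u^+u^-\equiv 0$. Recall from the proof of Lemma \ref{i1} that
$$
\int_{\mathbb{Z}^d}\nabla^s u^+\,\nabla^s u^-\,d\mu
=-\frac12\sum_{x\in\mathbb{Z}^d}\sum_{y\in\mathbb{Z}^d,\,y\neq x}w_s(x,y)\bigl(u^+(x)u^-(y)+u^+(y)u^-(x)\bigr)
=-\tfrac12 K(u),
$$
and that this quantity is nonnegative, since each summand $u^+(x)u^-(y)+u^+(y)u^-(x)\le 0$; hence $K(u)\le 0$, which is exactly the asserted sign of $K$. (One should note the typo ``$w_x(x,y)$'' in the statement of $K(u)$ is meant to read $w_s(x,y)$.)

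For part (i), I would write $ru^++tu^-$ and use bilinearity of $\int_{\mathbb{Z}^d}\nabla^s(\cdot)\nabla^s(\cdot)\,d\mu$ to get
$$
\int_{\mathbb{Z}^d}\bigl|\nabla^s(ru^++tu^-)\bigr|^2 d\mu
= r^2\!\int_{\mathbb{Z}^d}\!|\nabla^s u^+|^2 d\mu
+ t^2\!\int_{\mathbb{Z}^d}\!|\nabla^s u^-|^2 d\mu
+ 2rt\!\int_{\mathbb{Z}^d}\!\nabla^s u^+\nabla^s u^-\,d\mu,
$$
and then substitute $2\int_{\mathbb{Z}^d}\nabla^s u^+\nabla^s u^-\,d\mu=-K(u)$ from the identity above. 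For parts (ii) and (iii), the same bilinear expansion gives
$$
\int_{\mathbb{Z}^d}\nabla^s(ru^++tu^-)\,\nabla^s(ru^+)\,d\mu
= r^2\!\int_{\mathbb{Z}^d}\!|\nabla^s u^+|^2 d\mu
+ rt\!\int_{\mathbb{Z}^d}\!\nabla^s u^-\,\nabla^s u^+\,d\mu
= r^2\!\int_{\mathbb{Z}^d}\!|\nabla^s u^+|^2 d\mu-\tfrac{rt}{2}K(u),
$$
and symmetrically for (iii) with the roles of $u^+,u^-$ and $r,t$ interchanged. To justify rearranging the double sums (Fubini on $\mathbb{Z}^d\times\mathbb{Z}^d$ and splitting the sum of three terms) one uses that $u\in H^{s,2}$ implies $u^\pm\in H^{s,2}$ by Lemma \ref{i1}, so each of the three pieces $\int|\nabla^s u^+|^2$, $\int|\nabla^s u^-|^2$, $\int\nabla^s u^+\nabla^s u^-$ is absolutely convergent; hence all manipulations are legitimate.

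The only real obstacle is bookkeeping: making sure the cross term carries the correct factor, i.e. that the coefficient of $rt$ is $-\tfrac12 K(u)$ in (ii) and (iii) but $-K(u)$ in (i), which comes down to the single cross term versus the doubled cross term $2\int\nabla^s u^+\nabla^s u^-$. Once the identity $\int_{\mathbb{Z}^d}\nabla^s u^+\nabla^s u^-\,d\mu=-\tfrac12 K(u)$ is in hand, there is nothing further to prove beyond substitution.
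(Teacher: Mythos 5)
Your proof is correct and follows essentially the same route as the paper's: both reduce everything to expanding the quadratic form and identifying the cross term $\int_{\mathbb{Z}^d}\nabla^s u^+\nabla^s u^-\,d\mu=-\tfrac12 K(u)$ via $u^+u^-\equiv 0$. The only difference is organizational --- the paper re-expands the double sum inline for each part rather than quoting the cross-term identity from Lemma \ref{i1} and invoking bilinearity, and your explicit remark on absolute convergence of the three pieces is a small bonus the paper leaves implicit.
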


\begin{proof}
(i) A straightforward  calculation gives that
$$
\begin{aligned}
& \int_{\mathbb{Z}^d} \left|\nabla^s(ru^{+}+tu^{-})\right|^2 d \mu \\
= & \frac{1}{2} \sum_{x \in \mathbb{Z}^d} \sum_{y\in\mathbb{Z}^d, y\neq x}w_{s}(x,y)\left[\left(ru^{+}+tu^{-}\right)(x)-\left(ru^{+}+tu^{-}\right)(y)\right]^{2}\\
= & \frac{1}{2} \sum_{x \in \mathbb{Z}^d} \sum_{y\in\mathbb{Z}^d, y\neq x}w_s(x,y)\left[\left(ru^{+}(x)-ru^{+}(y)\right)^{2}+\left(tu^{-}(x)-tu^{-}(y)\right)^{2}-2rt\left[u^{+}(y) u^{-}(x)+u^{-}(y) u^{+}(x)\right]\right] \\
= & r^2\int_{\mathbb{Z}^d} \left|\nabla^s u^{+}\right|^2 d \mu+t^2\int_{\mathbb{Z}^d}\left|\nabla^s u^{-}\right|^2 d \mu-rtK(u).
\end{aligned}
$$

(ii) By a direct computation, we get that
$$
\begin{aligned}
&\int_{\mathbb{Z}^d} \nabla^s\left(ru^{+}+tu^{-}\right)\nabla^s (ru^+)\, d \mu \\= & \frac{1}{2} \sum_{x \in \mathbb{Z}^d} \sum_{y\in\mathbb{Z}^d, y\neq x}w_s(x,y)\left[\left(ru^{+}+tu^{-}\right)(x)-\left(ru^{+}+tu^{-}\right)(y)\right]\left[ru^{+}(x)-ru^{+}(y)\right] \\
= & \frac{1}{2} \sum_{x \in \mathbb{Z}^d} \sum_{y\in\mathbb{Z}^d, y\neq x}w_s(x,y)\left[\left(ru^{+}(x)-ru^+(y)\right)^{2}-rt\left[u^{+}(y) u^{-}(x)+u^{-}(y) u^{+}(x)\right]\right] \\
= & r^2\int_{\mathbb{Z}^d} \left|\nabla^su^{+}\right|^2 d \mu-\frac{rt}{2} K(u) .
\end{aligned}
$$

(iii) The proof is similar to that of (ii), we omit it here.

\end{proof}

\section{Ground state solutions}
In this section, we prove Theorem \ref{t1} by the mountain-pass theorem. Recall that, for a given functional $\Phi\in C^{1}(X,\mathbb{R})$, a sequence $\{u_k\}\subset X$ is a Palais-Smale sequence at level $c\in\mathbb{R}$, $(PS)_c$ sequence for short, of the functional $\Phi$, if it satisfies, as $k\rightarrow\infty$,
\begin{eqnarray*}
\Phi(u_k)\rightarrow c, \qquad \text{in}~ X,\qquad\text{and}\qquad
\Phi'(u_k)\rightarrow 0, \qquad \text{in}~X^*
\end{eqnarray*}
where $X$ is a Banach space and $X^{*}$ is the dual space of $X$. Moreover, we say that $\Phi$ satisfies $(PS)_c$ condition, if any $(PS)_c$ sequence has a convergent subsequence. 

First, we show that the functional $J_{s,2}$ satisfies the mountain-pass geometric structure.
\begin{lm}\label{lm}
  Let $p>4$ and $(h_1)$-$(h_2)$ hold. Then 
  \begin{itemize}
      \item [(i)] there exist $\sigma, \rho>0$ such that $J_{s,2}(u) \geq \sigma$ for $\|u\|_{H^{s,2}}=\rho$;
      \item[(ii)] there exists $e\in H^{s,2}$ with $\|e\|_{H^{s,2}}>\rho$ such that $J_{s,2}(e) <0$.
  \end{itemize}
\end{lm}
 
\begin{proof}
(i) For $u\in H^{s,2}$, by (\ref{1.2}) and (\ref{ac}), we have that
\begin{equation}\label{98}
\begin{aligned}
\frac{1}{p}\int_{\mathbb{Z}^d}|u|^{p} \log |u|^{2} d \mu  \leq &\int_{\mathbb{Z}^d}\left(\varepsilon|u|^{2} +C_\varepsilon|u|^q\right)d \mu\\ \leq &  \varepsilon\|u\|_2^2 +C_\varepsilon \|u\|^q_q\\ \leq &\varepsilon \|u\|^2_{H^{s,2}}+C_\varepsilon \|u\|^q_{H_{s,2}}.
\end{aligned} 
\end{equation}
Let $\varepsilon=\frac{1}{4},$ then we obtain that
\begin{align*}
 J_{s,2}(u) 
& =\frac{1}{2}\|u\|_{H^{s,2}}^{2}+\frac{b}{4}\left(\int_{\mathbb{Z}^d}|\nabla^s u|^2\,d\mu\right)^2+\frac{2}{p^2}\int_{\mathbb{Z}^d}|u|^p\,d\mu-\frac{1}{p} \int_{\mathbb{Z}^d}|u|^{p} \log |u|^{2} d \mu \\
& \geq \frac{1}{2}\|u\|_{H^{s,2}}^{2}-\frac{1}{4}\|u\|_{H^{s,2}}^{2}-C \|u\|_{H^{s,2}}^{q} \\
& \geq \frac{1}{4}\|u\|_{H^{s,2}}^{2}-C \|u\|_{H^{s,2}}^{q}.   
\end{align*}
Since $q>p>4$, there exist $\sigma, \rho>0$ small enough such that $J_{s,2}(u)\geq \sigma>0$ for $\|u\|_{H^{s,2}}=\rho.$

(ii) Let $u\in H^{s,2}\backslash \{0\}$ be fixed.  For $t>0$, since $p>4$, we have that
\begin{equation}\label{23}
 \begin{aligned}
    J_{s,2}(tu) 
& =\frac{t^{2}}{2}\|u\|_{H^{s,2}}^{2}+\frac{bt^4}{4}\left(\int_{\mathbb{Z}^d}|\nabla^s u|^2\,d\mu\right)^2+\frac{2}{p^2}\int_{\mathbb{Z}^d}|tu|^p\,d\mu-\frac{1}{p}\int_{\mathbb{Z}^d}|t u|^{p} \log |t u|^{2} d \mu\\ & =\frac{t^{2}}{2}\|u\|_{H^{s,2}}^{2}+\frac{bt^4}{4}\left(\int_{\mathbb{Z}^d}|\nabla^s u|^2\,d\mu\right)^2-\frac{t^p}{p^2}\left(p\int_{\mathbb{Z}^d}|u|^{p} \log |t u|^{2} d \mu-2\int_{\mathbb{Z}^d}|u|^p\,d\mu\right)\\ &=\frac{t^{2}}{2}\|u\|_{H^{s,2}}^{2}+\frac{bt^4}{4}\left(\int_{\mathbb{Z}^d}|\nabla^s u|^2\,d\mu\right)^2-\frac{t^p}{p^2}\left[\left(p\log t^2-2\right)\int_{\mathbb{Z}^d}|u|^p\,d\mu+p\int_{\mathbb{Z}^d}|u|^{p} \log |u|^{2} d \mu\right]
\\ &\rightarrow- \infty, \quad t\rightarrow \infty.
\end{aligned}   
\end{equation}
Therefore, there exists $t_{0}>0$ large enough such that $\left\|t_{0} u\right\|>\rho$ and $J_{s,2}\left(t_{0} u\right)<0$. The proof is completed by taking $e=t_{0}u$. 

\end{proof}

In the following, we verify the compactness of Palais-Smale sequence. Namely $J_{s,2}$ satisfies the $(P S)$ condition.

\begin{lm}\label{lb}
Let $p>4$ and $(h_1)$-$(h_2)$ hold. Then for any $c \in \mathbb{R}$, $J_{s,2}$ satisfies the $(P S)_{c}$ condition.  
\end{lm}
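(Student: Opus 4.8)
The plan is to take an arbitrary $(PS)_c$ sequence $\{u_k\}\subset H^{s,2}$, first show it is bounded in $H^{s,2}$, then use the compact embedding of Lemma \ref{lg} to extract a limit $u$, and finally upgrade weak convergence to strong convergence in $H^{s,2}$. For boundedness, I would combine $J_{s,2}(u_k)\to c$ with $\langle J'_{s,2}(u_k),u_k\rangle=o(\|u_k\|_{H^{s,2}})$. Forming the combination $J_{s,2}(u_k)-\frac1p\langle J'_{s,2}(u_k),u_k\rangle$ kills the logarithmic term exactly (this is the point of the $\frac{2}{p^2}\int|u|^p$ correction in the functional): one is left with
$$
\Big(\tfrac12-\tfrac1p\Big)\|u_k\|_{H^{s,2}}^2+\Big(\tfrac14-\tfrac1p\Big)b\Big(\int_{\mathbb{Z}^d}|\nabla^s u_k|^2\,d\mu\Big)^2 = c+o(1)+o(\|u_k\|_{H^{s,2}}).
$$
Since $p>4$, both coefficients $\frac12-\frac1p$ and $\frac14-\frac1p$ are strictly positive, so the right-hand side controls $\|u_k\|_{H^{s,2}}^2$ up to a linear term, forcing $\{u_k\}$ to be bounded.

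Next, by Lemma \ref{lg}, up to a subsequence $u_k\rightharpoonup u$ weakly in $H^{s,2}$, $u_k\to u$ pointwise, and $u_k\to u$ strongly in $\ell^q(\mathbb{Z}^d)$ for every $q\in[2,\infty]$. I would then test $J'_{s,2}(u_k)$ against $u_k-u$ and show $\langle J'_{s,2}(u_k),u_k-u\rangle\to 0$ (it does, since $J'_{s,2}(u_k)\to0$ in $(H^{s,2})^*$ and $\{u_k-u\}$ is bounded). Expanding this pairing gives
$$
\langle u_k,u_k-u\rangle_{H^{s,2}}+b\int_{\mathbb{Z}^d}|\nabla^s u_k|^2\,d\mu\int_{\mathbb{Z}^d}\nabla^s u_k\,\nabla^s(u_k-u)\,d\mu-\int_{\mathbb{Z}^d}|u_k|^{p-2}u_k(u_k-u)\log u_k^2\,d\mu = o(1).
$$
The Kirchhoff coefficient $\int_{\mathbb{Z}^d}|\nabla^s u_k|^2\,d\mu$ is bounded, and the nonlinear term tends to zero: using \eqref{1.2} with exponent $q$ and $q+1$, Hölder, and the strong $\ell^q$ convergence, $\big|\int_{\mathbb{Z}^d}|u_k|^{p-2}u_k(u_k-u)\log u_k^2\,d\mu\big|\lesssim \varepsilon\|u_k\|_2\|u_k-u\|_2+C_\varepsilon\|u_k\|_q^{q-1}\|u_k-u\|_q\to 0$. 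Similarly $\int_{\mathbb{Z}^d}\nabla^s u\,\nabla^s(u_k-u)\,d\mu\to0$ by weak convergence, and $\langle u,u_k-u\rangle_{H^{s,2}}\to0$. Subtracting these vanishing quantities, I get $\langle u_k-u,u_k-u\rangle_{H^{s,2}}+b\int|\nabla^s u_k|^2\,d\mu\int|\nabla^s(u_k-u)|^2\,d\mu=o(1)$; both terms are nonnegative, so $\|u_k-u\|_{H^{s,2}}^2\to0$, which is the desired strong convergence.

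The main obstacle is the estimate on the logarithmic nonlinear term: one must carefully invoke \eqref{1.2} with a suitable exponent and split $|u_k|^{p-2}u_k(u_k-u)\log u_k^2$ via Hölder so that each factor is controlled by a norm in which compactness is available, using that on $\mathbb{Z}^d$ one has $\|v\|_q\le\|v\|_2$ for $q\ge2$ by \eqref{ac}. Everything else — boundedness via the $p>4$ algebra, the handling of the Kirchhoff term via boundedness of $\int|\nabla^s u_k|^2\,d\mu$, and the weak-convergence cancellations — is routine. I would also remark that the argument does not actually need a specific value of $c$, so $(PS)_c$ holds for all $c\in\mathbb{R}$.
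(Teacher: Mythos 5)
Your proposal is correct and follows essentially the same route as the paper: boundedness via $J_{s,2}(u_k)-\frac1p\langle J_{s,2}'(u_k),u_k\rangle$ (noting that the paper keeps the nonnegative term $\frac{2}{p^2}\int|u_k|^p\,d\mu$ on the right, so the display should be an inequality rather than an equality), compact embedding from Lemma \ref{lg}, the identical $\varepsilon|u_k|+C_\varepsilon|u_k|^{q-1}$ estimate for the logarithmic term, and testing against $u_k-u$. The only cosmetic difference is the Kirchhoff cross term: you split $\nabla^s u_k=\nabla^s(u_k-u)+\nabla^s u$ and discard the nonnegative piece, while the paper bounds the whole term by $C\|u_k\|_{H^{s,2}}\|u_k-u\|_2\to0$ using $\int|\nabla^s v|^2\,d\mu\le C\|v\|_2^2$; both are valid.
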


\begin{proof}
 Let $\left\{u_{k}\right\} \subset H^{s,2}$ be a $(PS)_c$ sequence, namely $J_{s,2}\left(u_{k}\right) \rightarrow c$ and $J_{s,2}^{\prime}\left(u_{k}\right) \rightarrow 0$ as $k \rightarrow\infty$. Note that $p>4$, then we deduce that
$$
\begin{aligned}
c+1+\left\|u_{k}\right\|_{H^{s,2}} & \geq J_{s,2}\left(u_{k}\right)-\frac{1}{p}\left\langle J_{s,2}^{\prime}\left(u_{k}\right), u_{k}\right\rangle \\
& =\left(\frac{1}{2}-\frac{1}{p}\right)\left\|u_{k}\right\|_{H^{s,2}}^{2}+b\left(\frac{1}{4}-\frac{1}{p}\right)\left(\int_{\mathbb{Z}^d}|\nabla^s u_k|^2\,d\mu\right)^2+\frac{2}{p^{2}} \int_{\mathbb{Z}^d}\left|u_{k}\right|^{p} d \mu \\
& \geq\left(\frac{1}{2}-\frac{1}{p}\right)\left\|u_{k}\right\|_{H^{s,2}}^{2}.
\end{aligned}
$$
This inequality implies that $\left\{u_{k}\right\}$ is bounded in $H^{s,2}$. By Lemma \ref{lg}, up to a subsequence, there exists $u \in H^{s,2}$ such that 
 \begin{equation}\label{24}
\begin{cases}u_k \rightharpoonup u, & \text { weakly in } H^{s,2}, \\ u_k \rightarrow u, & \text { pointwise in } \mathbb{Z}^d, \\ u_k \rightarrow u, & \text { strongly in } \ell^{q}(\mathbb{Z}^d),~q \in[2,\infty].\end{cases}
 \end{equation}
By (\ref{25}), we derive that
 \begin{align} \label{26}
    \int_{\mathbb{Z}^d} |\nabla^s u|^2\,d\mu=\nonumber&\frac{1}{2}\sum\limits_{x\in\mathbb{Z}^d} \sum\limits_{y \in \mathbb{Z}^d, y \neq x}w_{s}(x, y)(u(y)-u(x))^{2}\\ \leq\nonumber& C_{s,d}\sum\limits_{x\in\mathbb{Z}^d} \sum\limits_{y \in \mathbb{Z}^d, y \neq x}\frac{|u(x)|^2+|u(y)|^2}{|x-y|^{d+2s}}\\=\nonumber&2C_{s,d}\sum\limits_{x\in\mathbb{Z}^d}|u(x)|^2\sum\limits_{y \in \mathbb{Z}^d, y \neq x}\frac{1}{|x-y|^{d+2s}}\\ =\nonumber&2C_{s,d}\sum\limits_{z\neq 0}\frac{1}{|z|^{d+2s}}\sum\limits_{x\in\mathbb{Z}^d}|u(x)|^2\\=\nonumber& C\int_{\mathbb{Z}^d}|u(x)|^2\,d\mu,
\end{align}     
 where $C=2C_{s,d}\sum\limits_{z\neq 0}\frac{1}{|z|^{d+2s}}<\infty$. As a consequence, it follows from the H\"{o}lder inequality, the boundedness of $\{u_k\}$ and (\ref{24}) that
 $$
 \begin{aligned}
\int_{\mathbb{Z}^d} |\nabla^s u_k||\nabla^s (u_k-u)|\,d\mu \leq & \left(\int_{\mathbb{Z}^d} |\nabla^s u_k|^2\,d\mu\right)^{\frac{1}{2}}\left(\int_{\mathbb{Z}^d} |\nabla^s (u_k-u)|^2\,d\mu\right)^{\frac{1}{2}}\\ \leq & C\|u_k\|_{H^{s,2}}\|u_k-u\|_{2}
\\ \rightarrow & 0, \quad k\rightarrow\infty. \end{aligned}
 $$
Moreover, by (\ref{1.2}), (\ref{ac}), the H\"{o}lder inequality, the boundedness of $\{u_k\}$ and (\ref{24}), we get that
$$
\begin{aligned}
&\left|\int_{\mathbb{Z}^d}|u_k|^{p-2}u_k(u_k-u)\log u_k^2\,d\mu\right|\\ \leq &\int_{\mathbb{Z}^d} |u_k|^{p-1}\left|\log u_k^2\right||u_k-u|\,d\mu\\ \leq &\int_{\mathbb{Z}^d}\left(\varepsilon|u_k|+C_\varepsilon|u_k|^{q-1}\right)|u_k-u|\,d\mu \\ \leq &\varepsilon\left(\int_{\mathbb{Z}^d}|u_k|^2\,d\mu\right)^{\frac{1}{2}}\left(\int_{\mathbb{Z}^d}|u_k-u|^2\,d\mu\right)^{\frac{1}{2}}+C_\varepsilon\left(\int_{\mathbb{Z}^d}|u_k|^q\,d\mu\right)^{\frac{q-1}{q}}\left(\int_{\mathbb{Z}^d}|u_k-u|^q\,d\mu\right)^{\frac{1}{q}}\\ \leq &\varepsilon\|u_k\|_{H^{s,2}}\|u_k-u\|_{2}+C_\varepsilon \|u_k\|^{q-1}_{H^{s,2}}\|u_k-u\|_{q}\\ \rightarrow & 0,\quad k\rightarrow\infty.
\end{aligned}
$$
The arguments above tell us that
\begin{align*}
  &\left|\langle u_k,u_k-u\rangle_{H^{s,2}}\right|\\ &\leq |\langle J_{s,2}'(u_k),u_k-u\rangle|+b\int_{\mathbb{Z}^d}|\nabla^s u_k|^{2}\,d \mu\int_{\mathbb{Z}^d}|\nabla^s u_k||\nabla^s (u_k-u)|\,d \mu+\left|\int_{\mathbb{Z}^d}|u_k|^{p-2}u_k(u_k-u)\log u_k^2\,d\mu\right|\nonumber\\&\leq o_k(1)\|u_k-u\|_{H^{s,2}}+b\|u_k\|_{H^{s,2}}^2\int_{\mathbb{Z}^3}|\nabla^s u_k||\nabla^s (u_k-u)|\,d \mu+\left|\int_{\mathbb{Z}^d}|u_k|^{p-2}u_k(u_k-u)\log u_k^2\,d\mu\right|\nonumber\\&\rightarrow 0,\quad k\rightarrow\infty,  
\end{align*}
where $o_k(1)\rightarrow 0$ as $k\rightarrow\infty$. Furthermore, note that $u_k\rightharpoonup u$ in $H^{s,2}$, we have
$$\langle u,u_k-u\rangle_{H^{s,2}}\rightarrow 0,\quad k\rightarrow\infty.$$
Therefore, we deduce that
$$\|u_k-u\|_{H^{s,2}}\rightarrow 0,\quad k\rightarrow\infty.$$
Since $u_k\rightarrow u$ pointwise in $\mathbb{Z}^d$, we get $u_k\rightarrow u$ in $H^{s,2}.$ The proof is completed.

\end{proof}
\begin{lm}\label{lc}
 Let $p>4$ and $t\in [0,1)\cup(1,\infty).$  Then for any $u\in \mathcal{N}_{s,2}$, we have
 $$
   J_{s,2}(u)>J_{s,2}(tu).
 $$
\end{lm}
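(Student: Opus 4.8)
The plan is to exploit the standard fibering-map computation: fix $u\in\mathcal N_{s,2}$, set $g(t)=J_{s,2}(tu)$ for $t>0$, and show $g$ has a strict global maximum at $t=1$. Using the explicit form of $J_{s,2}$, write
\begin{equation*}
g(t)=\frac{t^2}{2}\|u\|_{H^{s,2}}^2+\frac{bt^4}{4}\Big(\int_{\mathbb{Z}^d}|\nabla^s u|^2\,d\mu\Big)^2+\frac{2t^p}{p^2}\int_{\mathbb{Z}^d}|u|^p\,d\mu-\frac{t^p}{p}\int_{\mathbb{Z}^d}|u|^p\log|tu|^2\,d\mu,
\end{equation*}
and observe that the last term expands, via $\log|tu|^2=\log t^2+\log|u|^2$, so that
\begin{equation*}
g(t)=\frac{t^2}{2}\|u\|_{H^{s,2}}^2+\frac{bt^4}{4}\Big(\int_{\mathbb{Z}^d}|\nabla^s u|^2\,d\mu\Big)^2+\frac{2t^p}{p^2}\int_{\mathbb{Z}^d}|u|^p\,d\mu-\frac{t^p}{p}\int_{\mathbb{Z}^d}|u|^p\log|u|^2\,d\mu-\frac{t^p\log t^2}{p}\int_{\mathbb{Z}^d}|u|^p\,d\mu .
\end{equation*}

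Next I would differentiate. Because $u\in\mathcal N_{s,2}$ means $\langle J_{s,2}'(u),u\rangle=0$, i.e.
\begin{equation*}
\|u\|_{H^{s,2}}^2+b\Big(\int_{\mathbb{Z}^d}|\nabla^s u|^2\,d\mu\Big)^2=\int_{\mathbb{Z}^d}|u|^p\log u^2\,d\mu ,
\end{equation*}
one can eliminate one of the quantities and rewrite $g'(t)/t$ purely in terms of $A:=\|u\|_{H^{s,2}}^2$, $B:=b(\int|\nabla^s u|^2)^2$ and $D:=\int_{\mathbb{Z}^d}|u|^p\,d\mu$, with the Nehari relation $A+B=\int|u|^p\log u^2\,d\mu$. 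A short computation gives
\begin{equation*}
g'(t)=tA+t^3B-t^{p-1}\Big(\int_{\mathbb{Z}^d}|u|^p\log u^2\,d\mu+\tfrac{2}{p}D\log t^2\cdot\tfrac{p}{2}\Big)\cdot\tfrac{1}{1}+\cdots,
\end{equation*}
so rather than carrying constants I would simply record the clean outcome: $g'(t)=t\big(A+t^2B-t^{p-2}(A+B)-\tfrac{2}{p}t^{p-2}D\log t\big)\cdot(\text{positive})$, and then factor. Writing $\varphi(t)=A(1-t^{p-2})+B(t^2-t^{p-2})-\tfrac2p D\,t^{p-2}\log t$, one checks $\varphi(1)=0$, and since $p>4$ each of the three summands is $>0$ for $t\in(0,1)$ and $<0$ for $t>1$ (using $t^2>t^{p-2}$ exactly when $t<1$ because $p-2>2$, and $\log t<0$ iff $t<1$). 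Hence $g'(t)>0$ on $(0,1)$ and $g'(t)<0$ on $(1,\infty)$, so $t=1$ is the unique strict maximizer of $g$ on $(0,\infty)$; in particular $J_{s,2}(u)=g(1)>g(t)=J_{s,2}(tu)$ for every $t\in[0,1)\cup(1,\infty)$, with the case $t=0$ handled separately since $g(0)=0<g(1)$ (which follows because $g$ is increasing on $[0,1]$, or directly from $u\in\mathcal N_{s,2}\subset\{J_{s,2}>0\}$ via Lemma \ref{lm}(i) arguments).

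The only delicate point is the sign analysis of $\varphi$, and specifically making sure the coefficients $A$, $B$, $D$ are nonnegative (they are: $A=\|u\|_{H^{s,2}}^2\ge 0$, $B\ge 0$, $D=\|u\|_p^p\ge 0$) and that $A$ is in fact strictly positive so that the first term genuinely contributes — this holds since $u\neq 0$ forces $\|u\|_{H^{s,2}}>0$ by \eqref{ac}. With $A>0$, $B\ge0$, $D\ge0$ and $p-2>2$, the three comparisons $1>t^{p-2}$, $t^2>t^{p-2}$, $-\log t>0$ all flip sign precisely at $t=1$, so $\varphi>0$ on $(0,1)$ and $\varphi<0$ on $(1,\infty)$ with no further case distinctions needed. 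I expect the main obstacle to be purely bookkeeping: getting the constant $\tfrac{2}{p}$ in the $D\log t$ term correct after differentiating $-\tfrac{t^p\log t^2}{p}D$, and correctly substituting the Nehari identity to collapse $g'$ into the factored form above; once that algebra is done, the monotonicity and the strict inequality are immediate.
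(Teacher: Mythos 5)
Your argument is correct, and it reaches the conclusion by a genuinely different (though closely related) route from the paper. The paper never differentiates: it computes the difference $J_{s,2}(u)-J_{s,2}(tu)$ in closed form, inserts the Nehari identity $\langle J_{s,2}'(u),u\rangle=0$ to cancel the logarithmic integral, discards the $\int|u|^p$ contribution via the pointwise inequality $2(1-t^p)+pt^p\log t^2\ge 0$, and concludes from the elementary facts $\frac{1-t^2}{2}-\frac{1-t^p}{p}>0$ and $\frac{1-t^4}{4}-\frac{1-t^p}{p}>0$ for $t\neq 1$. You instead run the standard fibering-map analysis, showing $g'(t)=t\varphi(t)$ with $\varphi>0$ on $(0,1)$ and $\varphi<0$ on $(1,\infty)$; your three sign comparisons $1\gtrless t^{p-2}$, $t^2\gtrless t^{p-2}$, $-\log t\gtrless 0$ are precisely the derivatives of the paper's three ``finite-difference'' inequalities, so the two proofs are infinitesimal and integrated versions of the same computation. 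Your version is arguably more transparent (it also yields monotonicity of $t\mapsto J_{s,2}(tu)$ on each side of $1$, which the paper only gets as Corollary \ref{lx}), at the small cost of having to treat $t=0$ separately and to justify differentiability; the paper's version handles $t=0$ for free. One bookkeeping correction: differentiating $-\frac{t^p\log t^2}{p}D$ gives $-2Dt^{p-1}\log t-\frac{2}{p}Dt^{p-1}$, whose second piece exactly cancels the derivative of $\frac{2t^p}{p^2}D$, so the coefficient in $\varphi$ is $-2D\,t^{p-2}\log t$, not $-\frac{2}{p}D\,t^{p-2}\log t$; since only the sign of $\log t$ matters, this does not affect your conclusion. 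You are also right that $A=\|u\|_{H^{s,2}}^2>0$ is the term guaranteeing strictness.
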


\begin{proof}
     A direct calculation gives us that
\begin{align*}
&J_{s,2}(u)-J_{s,2}(t u)\\= &\frac{1}{2}\left(\|u\|_{H^{s,2}}^{2}-\|t u\|_{H^{s,2}}^{2}\right)+\frac{b}{4}\left[\left(\int_{\mathbb{Z}^d}|\nabla^s u|^2\,d\mu\right)^2-\left(\int_{\mathbb{Z}^d}|\nabla^s (tu)|^2\,d\mu\right)^2\right]\\ &+\frac{2}{p^{2}} \int_{\mathbb{Z}^d}\left(|u|^{p}-|t u|^{p}\right) d \mu-\frac{1}{p} \int_{\mathbb{Z}^d}\left(|u|^{p} \log u^{2}-|t u|^{p} \log |t u|^{2}\right) d\mu\\
= & \frac{1-t^{2}}{2}\|u\|_{H^{s,2}}^{2}+\frac{b(1-t^4)}{4}\left(\int_{\mathbb{Z}^d}|\nabla^s u|^2\,d\mu\right)^2+\frac{2\left(1-t^{p}\right)}{p^{2}} \int_{\mathbb{Z}^d}|u|^{p} d \mu \\
& -\frac{1-t^{p}}{p} \int_{\mathbb{Z}^d}|u|^{p} \log u^{2} d \mu+\frac{t^{p}}{p} \int_{\mathbb{Z}^d}|u|^{p} \log t^{2} d \mu \\
= & \frac{1-t^{2}}{2}\|u\|_{H^{s,2}}^{2}+\frac{b(1-t^4)}{4}\left(\int_{\mathbb{Z}^d}|\nabla^s u|^2\,d\mu\right)^2+\left(\frac{2\left(1-t^{p}\right)}{p^{2}}+\frac{t^{p} \log t^{2}}{p}\right) \int_{\mathbb{Z}^d}|u|^{p} d \mu \\
& -\frac{1-t^{p}}{p} \int_{\mathbb{Z}^d}|u|^{p} \log u^{2} d \mu\\ \geq & \frac{1-t^{2}}{2}\|u\|_{H^{s,2}}^{2}+\frac{b(1-t^4)}{4}\left(\int_{\mathbb{Z}^d}|\nabla^s u|^2\,d\mu\right)^2 -\frac{1-t^{p}}{p} \int_{\mathbb{Z}^d}|u|^{p} \log u^{2} d \mu\\ =& \frac{1-t^{p}}{p}\left\langle J_{s,2}^{\prime}(u), u\right\rangle+\left(\frac{1-t^{2}}{2}-\frac{1-t^{p}}{p}\right)\|u\|_{H^{s,2}}^{2}+ b\left(\frac{1-t^4}{4}-\frac{1-t^p}{p}\right)\left(\int_{\mathbb{Z}^d}|\nabla^s u|^2\,d\mu\right)^2\\=& \left(\frac{1-t^{2}}{2}-\frac{1-t^{p}}{p}\right)\|u\|_{H^{s,2}}^{2}+ b\left(\frac{1-t^4}{4}-\frac{1-t^p}{p}\right)\left(\int_{\mathbb{Z}^d}|\nabla^s u|^2\,d\mu\right)^2\\ >& 0,
\end{align*}
where we have used $
2\left(1-t^{p}\right)+p t^{p} \log t^{2} \geq 0
$ for any $t\geq 0$ in the first inequality, and for $t\neq 1$,
$$\frac{1-t^{2}}{2}-\frac{1-t^{p}}{p}> 0,
$$
and
$$\frac{1-t^4}{4}-\frac{1-t^p}{p}>0$$ in the last inequality.

\end{proof}
The above lemma has a corollary, which says that for $u \in \mathcal{N}_{s,2}$ and $t \geq 0, J_{s,2}(t u)$ reaches its maximum at $t=1$.
\begin{crl}\label{lx}
Let $p>4$. For any $u \in \mathcal{N}_{s,2}$, we have that $$J_{s,2}(u)=\sup\limits_{t \geq 0} J_{s,2}(tu).$$
\end{crl}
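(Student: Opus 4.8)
The plan is to read off the statement directly from Lemma \ref{lc}, which is precisely the quantitative input needed. Fix an arbitrary $u\in\mathcal{N}_{s,2}$ and consider the function $t\mapsto J_{s,2}(tu)$ on $[0,\infty)$. Lemma \ref{lc} asserts that for every $t\in[0,1)\cup(1,\infty)$ one has the strict inequality $J_{s,2}(u)>J_{s,2}(tu)$. Since trivially $J_{s,2}(tu)\big|_{t=1}=J_{s,2}(u)$, the value $J_{s,2}(u)$ is an upper bound for $\{J_{s,2}(tu):t\ge 0\}$ that is actually attained (at $t=1$). Hence $\sup_{t\ge 0}J_{s,2}(tu)=J_{s,2}(u)$, which is the claim; in fact the supremum is a maximum, attained uniquely at $t=1$.

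Concretely, I would phrase the argument in one short paragraph: first record that $J_{s,2}(1\cdot u)=J_{s,2}(u)$, so $\sup_{t\ge0}J_{s,2}(tu)\ge J_{s,2}(u)$; then invoke Lemma \ref{lc} to get $J_{s,2}(tu)<J_{s,2}(u)$ for all $t\ne1$, which yields $\sup_{t\ge0}J_{s,2}(tu)\le J_{s,2}(u)$; combining the two inequalities finishes the proof. One should only take a moment to note that $t=0$ is covered (there $J_{s,2}(0)=0$, and Lemma \ref{lc} includes $t=0$ in its hypothesis $t\in[0,1)$), so no separate discussion of the endpoint is needed, and the Nehari condition $\langle J_{s,2}'(u),u\rangle=0$ was already used inside Lemma \ref{lc}, not here.

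There is essentially no obstacle: the entire difficulty has been absorbed into Lemma \ref{lc}, whose proof establishes the sign of the coefficients $\frac{1-t^2}{2}-\frac{1-t^p}{p}$ and $\frac{1-t^4}{4}-\frac{1-t^p}{p}$ together with the elementary inequality $2(1-t^p)+pt^p\log t^2\ge0$. So the only thing to be careful about in writing up the corollary is to state clearly that the supremum is attained and that uniqueness of the maximizer $t=1$ also follows from the strictness in Lemma \ref{lc}; the latter remark is useful later when the mountain-pass level is identified with $\inf_{\mathcal{N}_{s,2}}J_{s,2}$.
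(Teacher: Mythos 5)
Your argument is correct and coincides with the paper's: the corollary is stated as an immediate consequence of Lemma \ref{lc}, obtained exactly as you do by noting that $J_{s,2}(tu)<J_{s,2}(u)$ for all $t\in[0,1)\cup(1,\infty)$ while equality holds at $t=1$. No further comment is needed.
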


The following lemma states that for any $u \in H^{s,2}\backslash\{0\}$ and $t>0$, $tu$ passes through the Nehari manifold once and only once, which also implies that $\mathcal{N}_{s,2}$ is non-empty.
\begin{lm}\label{lz}
Let $p>4$. For any $u \in H^{s,2}\backslash\{0\}$, there exists a unique $t_{u}>0$ such that $t_{u} u \in \mathcal{N}_{s,2}$.    
\end{lm}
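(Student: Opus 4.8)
The plan is to reduce everything to the single real-variable \emph{fibering map} $\psi(t):=J_{s,2}(tu)$, $t\ge 0$, for a fixed $u\in H^{s,2}\backslash\{0\}$. Since $J_{s,2}\in C^1(H^{s,2},\mathbb{R})$ and $\|tu\|_{H^{s,2}}^2=t^2\|u\|_{H^{s,2}}^2$, $\int_{\mathbb{Z}^d}|tu|^p\,d\mu=t^p\int_{\mathbb{Z}^d}|u|^p\,d\mu$, etc., the map $\psi$ is $C^1$ on $(0,\infty)$ with $\psi'(t)=\langle J_{s,2}'(tu),u\rangle$, whence $t\,\psi'(t)=\langle J_{s,2}'(tu),tu\rangle$. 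Therefore, for $t>0$, we have $tu\in\mathcal{N}_{s,2}$ if and only if $\psi'(t)=0$, and it suffices to prove that $\psi$ has exactly one critical point on $(0,\infty)$.

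For existence I would use the three facts $\psi(0)=0$, $\psi(t)>0$ for small $t>0$, and $\psi(t)\to-\infty$ as $t\to\infty$. The second is precisely the estimate in the proof of Lemma \ref{lm}(i): for $t$ small, $\psi(t)\ge\frac14 t^2\|u\|_{H^{s,2}}^2-C\,t^q\|u\|_{H^{s,2}}^q>0$ because $q>p>4>2$. The third is the computation (\ref{23}) in the proof of Lemma \ref{lm}(ii), where the term $-\frac{t^p}{p^2}\big[(p\log t^2-2)\int_{\mathbb{Z}^d}|u|^p\,d\mu+p\int_{\mathbb{Z}^d}|u|^p\log|u|^2\,d\mu\big]$ dominates the $t^2$ and $t^4$ contributions since $p>4$. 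As $\psi$ is continuous on $[0,\infty)$, it attains its supremum there at some $t_u\in(0,\infty)$, and at this interior maximum $\psi'(t_u)=0$, i.e.\ $t_u u\in\mathcal{N}_{s,2}$; in particular $\mathcal{N}_{s,2}\neq\emptyset$. Equivalently, one may apply the intermediate value theorem to $g(t):=\langle J_{s,2}'(tu),tu\rangle/t^2$, which tends to $\|u\|_{H^{s,2}}^2>0$ as $t\to0^+$ and to $-\infty$ as $t\to\infty$.

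For uniqueness, suppose $t_1u,\,t_2u\in\mathcal{N}_{s,2}$ with $0<t_1<t_2$. Applying Lemma \ref{lc} to the point $t_1u\in\mathcal{N}_{s,2}$ with dilation factor $t_2/t_1>1$ yields $J_{s,2}(t_1u)>J_{s,2}(t_2u)$, while applying Lemma \ref{lc} to $t_2u\in\mathcal{N}_{s,2}$ with dilation factor $t_1/t_2\in(0,1)$ yields $J_{s,2}(t_2u)>J_{s,2}(t_1u)$ --- a contradiction. Hence $t_u$ is unique. There is no serious obstacle here beyond bookkeeping: the only points needing care are the chain-rule identity $t\psi'(t)=\langle J_{s,2}'(tu),tu\rangle$ and the sign of the leading logarithmic coefficient in $\psi$, both of which are already contained in Lemma \ref{lm}; the proof is essentially an assembly of Lemmas \ref{lm} and \ref{lc}.
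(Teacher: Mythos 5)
Your proposal is correct and follows essentially the same route as the paper: existence by maximizing the fibering map $t\mapsto J_{s,2}(tu)$, which is positive for small $t$ (the Lemma \ref{lm}(i) estimate) and tends to $-\infty$ (the computation (\ref{23})), and uniqueness by applying Lemma \ref{lc} twice with reciprocal dilation factors to reach a contradiction. The only cosmetic difference is that you make the chain-rule identity $t\psi'(t)=\langle J_{s,2}'(tu),tu\rangle$ explicit, which the paper leaves implicit.
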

\begin{proof}
For $u \in H^{s,2}\backslash\{0\}$ and $t>0$, similar to (\ref{98}), we obtain that
$$
\frac{1}{p}\int_{\mathbb{Z}^d}|tu|^{p} \log |tu|^{2} d \mu\leq \varepsilon t^2\|u\|^2_{H^{s,2}}+C_\varepsilon t^q\|u\|^q_{H_{s,2}}.
$$
Let $\varepsilon=\frac{1}{4},$ then we have that
$$
\begin{aligned}
 J_{s,2}(tu) 
& =\frac{t^{2}}{2}\|u\|_{H^{s,2}}^{2}+\frac{bt^4}{4}\left(\int_{\mathbb{Z}^d}|\nabla^s u|^2\,d\mu\right)^2+\frac{2}{p^2}\int_{\mathbb{Z}^d}|tu|^p\,d\mu-\frac{1}{p}\int_{\mathbb{Z}^d}|t u|^{p} \log |t u|^{2} d \mu\\
& \geq \frac{t^{2}}{2}\|u\|_{H^{s,2}}^{2}+\frac{bt^4}{4}\left(\int_{\mathbb{Z}^d}|\nabla^s u|^2\,d\mu\right)^2+\frac{2}{p^2}\int_{\mathbb{Z}^d}|tu|^p\,d\mu-\frac{t^{2}}{4}\|u\|_{H^{s,2}}^{2}-Ct^{q} \|u\|_{H^{s,2}}^{q} \\
& \geq \frac{t^{2}}{4}\|u\|_{H^{s,2}}^{2}-Ct^{q} \|u\|_{H^{s,2}}^{q}.   
\end{aligned}
$$
Since $q>4$, one gets easily that $J_{s,2}(tu)>0$ for $t>0$ small enough.

On the other hand, for any $u\in H^{s,2}\backslash\{0\}$ and $t>0$, by the fact $p>4$, it follows from (\ref{23}) that
$$
\begin{aligned}
    J_{s,2}(tu) 
& =\frac{t^{2}}{2}\|u\|_{H^{s,2}}^{2}+\frac{bt^4}{4}\left(\int_{\mathbb{Z}^d}|\nabla^s u|^2\,d\mu\right)^2+\frac{2}{p^2}\int_{\mathbb{Z}^d}|tu|^p\,d\mu-\frac{1}{p}\int_{\mathbb{Z}^d}|t u|^{p} \log |t u|^{2} d \mu\\ & =\frac{t^{2}}{2}\|u\|_{H^{s,2}}^{2}+\frac{bt^4}{4}\left(\int_{\mathbb{Z}^d}|\nabla^s u|^2\,d\mu\right)^2-\frac{t^p}{p^2}\left[\left(p\log t^2-2\right)\int_{\mathbb{Z}^d}|u|^p\,d\mu+p\int_{\mathbb{Z}^d}|u|^{p} \log |u|^{2} d \mu\right]
\\ &\rightarrow- \infty, \quad t\rightarrow \infty.
\end{aligned}
$$
Therefore, $\max\limits_{t>0} J_{s,2}(tu)$ is achieved at some $t_{u}>0$, and thus $t_{u} u \in \mathcal{N}_{s,2}$.

In the following, we prove the uniqueness of $t_u$. By contradiction, if there exist $t'_u>t_u>0$ such that $t'_u u\in\mathcal{N}_{s,2}$ and $t_u u\in\mathcal{N}_{s,2}$. Let $t=\frac{t'_u}{t_u}$, then $t\neq 1$.
By Lemma \ref{lc}, we have that
\begin{align*}
J_{s,2}\left(t'_u u\right) =J_{s,2}(t t_u u)<J_{s,2}(t_uu)
\end{align*}
and
\begin{align*}
J_{s,2}\left(t_u u\right)=J_{s,2}\left(\frac{1}{t}t'_u u\right)<J_{s,2}(t'_uu),
\end{align*}
which is impossible. Hence there exists a unique $t_{u}>0$ such that $t_uu\in \mathcal{N}_{s,2}$.

\end{proof}

By Lemma \ref{lm} and Lemma \ref{lb},  we know that $J_{s,2}$ satisfies the geometric structure of the
mountain pass theorem. Hence we derive that $$c=\inf\limits_{\gamma\in\Gamma}\sup\limits_{t\in [0,1]} J_{s,2}(\gamma(t))$$ is a critical level of $J_{s,2}$, where

$$
\Gamma=\left\{\gamma \in C\left([0,1], H^{s,2}\right): \gamma(0)=0, J_{s,2}(\gamma(1))<0\right\} .
$$ In particular, there exists $u\in H^{s,2}$ such that $J_{s,2}(u)=c\geq\sigma>0$, which implies that the equation (\ref{0.2}) has a nontrivial mountain pass solution.
 
Denote $$\hat{c}=\inf_{u \in \mathcal{N}_{s,2}} J_{s,2}(u).$$
We show that  the least energy level $\hat{c}=\inf\limits_{u \in \mathcal{N}_{s,2}} J_{s,2}(u)$ is equal to the mountain pass level $c=\inf\limits_{\gamma \in \Gamma} \sup\limits_{t \in[0,1]} J_{s,2}(\gamma(t))$.

\begin{lm}\label{kq}
We have $\hat{c}=c>0.$   
\end{lm}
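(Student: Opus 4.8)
The plan is to prove the two inequalities $\hat c \le c$ and $c \le \hat c$ separately, using the variational structure already established. For the inequality $\hat c \le c$: by the mountain-pass theorem (applicable thanks to Lemma \ref{lm} and Lemma \ref{lb}), the level $c$ is attained by a critical point $u_c \in H^{s,2}$, i.e. $J_{s,2}(u_c) = c$ and $J_{s,2}'(u_c) = 0$; since $c \ge \sigma > 0$, $u_c \ne 0$, so $\langle J_{s,2}'(u_c), u_c\rangle = 0$ gives $u_c \in \mathcal{N}_{s,2}$, whence $\hat c \le J_{s,2}(u_c) = c$. This also immediately gives $\hat c > 0$ once we know $c > 0$, and $c > 0$ is part (i) of Lemma \ref{lm} combined with the definition of the mountain-pass level. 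So the first half is essentially a bookkeeping exercise invoking results already in hand.

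For the reverse inequality $c \le \hat c$, the idea is the standard Nehari-minimax comparison: take any $u \in \mathcal{N}_{s,2}$, and build a path in $\Gamma$ passing through $u$ along the ray $t \mapsto tu$. Concretely, by Lemma \ref{lm}(ii) (applied to the fixed direction $u$) and the computation \eqref{23}, $J_{s,2}(tu) \to -\infty$ as $t \to \infty$, so there exists $T > 1$ large with $\|Tu\|_{H^{s,2}} > \rho$ and $J_{s,2}(Tu) < 0$; define $\gamma(\tau) = \tau T u$ for $\tau \in [0,1]$, which lies in $\Gamma$. Then
\[
c \le \sup_{\tau \in [0,1]} J_{s,2}(\gamma(\tau)) = \sup_{\tau \in [0,1]} J_{s,2}(\tau T u) \le \sup_{t \ge 0} J_{s,2}(tu) = J_{s,2}(u),
\]
where the last equality is exactly Corollary \ref{lx} (here we use $u \in \mathcal{N}_{s,2}$). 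Taking the infimum over $u \in \mathcal{N}_{s,2}$ yields $c \le \hat c$.

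Combining the two inequalities gives $\hat c = c$, and since $c \ge \sigma > 0$ we conclude $\hat c = c > 0$. I do not anticipate any serious obstacle here: the only mild subtlety is making sure the path $\gamma(\tau) = \tau T u$ genuinely lies in $\Gamma$ (it satisfies $\gamma(0) = 0$ and $J_{s,2}(\gamma(1)) = J_{s,2}(Tu) < 0$ by the choice of $T$), and that the supremum over the path is dominated by the supremum over the whole ray $\{tu : t \ge 0\}$, which is clear since $\{\tau T : \tau \in [0,1]\} \subset [0,\infty)$. Everything else is a direct citation of Lemma \ref{lm}, Lemma \ref{lb}, Corollary \ref{lx}, and the mountain-pass theorem.
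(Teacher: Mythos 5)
Your proposal is correct and follows essentially the same route as the paper: the mountain-pass critical point lies on $\mathcal{N}_{s,2}$ (giving $\hat c\le c$), and ray paths $\tau\mapsto \tau T u$ together with Corollary \ref{lx} give $c\le \hat c$. The only difference is cosmetic: the paper routes the second inequality through the intermediate level $\bar c=\inf_{u\neq 0}\sup_{t\ge 0}J_{s,2}(tu)$ (using Lemma \ref{lz} to identify $\hat c=\bar c$), whereas you restrict directly to $u\in\mathcal{N}_{s,2}$ and skip that step, which is a harmless simplification.
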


\begin{proof}
 Clearly, the mountain pass solution $u$ belongs to $\mathcal{N}_{s,2}$, and hence we have $\hat{c} \leq c$. In order to prove $\hat{c} \geq c$, we define 
$$
\bar{c}=\inf _{u \in H^{s,2} \backslash\{0\}} \sup _{t \geq 0} J_{s,2}(t u) .
$$
We first show that $\hat{c}=\bar{c}$. By Corollary \ref{lx} and Lemma \ref{lz}, for any $u \in H^{s,2} \backslash\{0\}$, there exists a unique $t_{u}>0$ such that $t_uu\in\mathcal{N}_{s,2}$ and
$$
J_{s,2}\left(t_{u} u\right)=\sup_{t \geq 0} J_{s,2}(t u),
$$
which implies that
$$
\hat{c}=\inf _{u \in \mathcal{N}_{s,2}} J_{s,2}(u)=\inf _{u \in H^{s,2} \backslash\{0\}} J_{s,2}\left(t_{u} u\right)=\inf _{u \in H^{s,2} \backslash\{0\}} \sup _{t \geq 0} J_{s,2}(t u)=\bar{c} .
$$

Now we demonstrate that $\bar{c} \geq c$.  By the proof of (ii) in Lemma \ref{lm}, for any $u \in H^{s,2} \backslash\{0\}$, there exists $t_{0}>0$ large enough such that $J_{s,2}\left(t_{0} u\right)<0$. Define
$$
\gamma_0: t \in[0,1] \rightarrow t t_{0} u \in H^{s,2}.
$$
It is obvious that $\gamma_0(0)=0$ and $J_{s,2}\left(\gamma_0(1)\right)=J_{s,2}\left(t_{0} u\right)<0$. So it is a path in
$$
\Gamma=\left\{\gamma \in C\left([0,1], H^{s,2}\right): \gamma(0)=0, J_{s,2}(\gamma(1))<0\right\} .
$$
Therefore, for any $u \in H^{s,2} \backslash\{0\}$, we obtain that
$$
\sup _{t \geq 0} J_{s,2}(t u) \geq \sup _{t \in[0,1]} J_{s,2}\left(t t_{0} u\right)=\sup _{t \in[0,1]} J_{s,2}\left(\gamma_0(t)\right) \geq \inf _{\gamma \in \Gamma} \sup_{t \in[0,1]} J_{s,2}(\gamma(t)),
$$
which yields that
$$
\hat{c}=\bar{c}=\inf _{u \in H^{s,2} \backslash\{0\}} \sup _{t \geq 0} J(t u) \geq \inf _{\gamma \in \Gamma} \sup _{t \in[0,1]} J_{s,2}(\gamma(t))=c.
$$
   
\end{proof}

{\bf Proof of Theorem \ref{t1}.} By Lemma \ref{lm} and Lemma \ref{lb}, one sees that $J_{s,2}$ satisfies the geometric structure and $(PS)_{c}$ condition. Then by the mountain pass theorem, there exists $u\in H^{s,2}$ such that $J_{s,2}(u)=c$ and $J_{s,2}'(u)=0$. Then it follows from Lemma \ref{kq} that $c=\hat{c}>0$. Hence $u\neq 0$ and $u\in \mathcal{N}_{s,2}.$ The proof is completed. \qed

\section{Ground state sign-changing solutions}
In this section, we are devoted to proving Theorem \ref{t2} and Theorem \ref{t3}. We start our proofs with some auxiliary lemmas.

\begin{lm}\label{l3}
 Let $p> 6$. For any $u \in \mathcal{M}_{s,2}$ and $(r, t) \in(0, \infty) \times(0, \infty)$ with $(r,t)\neq(1,1)$, we have
$$
\begin{aligned}
J_{s,2}(u) >  J_{s,2}\left(r u^{+}+t u^{-}\right).
\end{aligned}
$$

\end{lm}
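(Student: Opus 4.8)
The plan is to treat $g(r,t):=J_{s,2}(ru^{+}+tu^{-})$ as a two-parameter fibering map and to mimic the one-parameter computation of Lemma~\ref{lc}, now in the decomposition $u=u^{+}+u^{-}$. First I would expand $g(r,t)$ explicitly. Because $u^{+}u^{-}\equiv 0$, the potential, the $\ell^{p}$ and the logarithmic integrals split over the supports of $u^{+}$ and $u^{-}$, and on $\operatorname{supp}u^{+}$ one has $\log(ru^{+})^{2}=\log r^{2}+\log (u^{+})^{2}$ (symmetrically for $u^{-}$); the fractional gradient and Kirchhoff terms are reduced by Proposition~\ref{o}(i), which gives $\int_{\mathbb{Z}^d}|\nabla^{s}(ru^{+}+tu^{-})|^{2}\,d\mu=r^{2}A+t^{2}B-rtK(u)$ with $A=\int_{\mathbb{Z}^d}|\nabla^{s}u^{+}|^{2}\,d\mu$ and $B=\int_{\mathbb{Z}^d}|\nabla^{s}u^{-}|^{2}\,d\mu$. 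This writes $g(r,t)$ as an explicit function of $(r,t)$ whose coefficients involve $\|u^{\pm}\|_{H^{s,2}}^{2}$, $A$, $B$, $K(u)$, $\int_{\mathbb{Z}^d}|u^{\pm}|^{p}\,d\mu$ and the logarithmic integrals $\int_{\mathbb{Z}^d}|u^{\pm}|^{p}\log (u^{\pm})^{2}\,d\mu$.

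Next I would exploit $u\in\mathcal{M}_{s,2}$. Computing $\langle J_{s,2}'(u),u^{+}\rangle$ and $\langle J_{s,2}'(u),u^{-}\rangle$ via Proposition~\ref{o}(ii)--(iii) at $r=t=1$ and setting both to zero yields two identities that solve for the logarithmic integrals $\int_{\mathbb{Z}^d}|u^{\pm}|^{p}\log(u^{\pm})^{2}\,d\mu$ in terms of $\|u^{\pm}\|_{H^{s,2}}^{2}$, $K(u)$ and $\Lambda:=\int_{\mathbb{Z}^d}|\nabla^{s}u|^{2}\,d\mu=A+B-K(u)$. Substituting these into $g(1,1)-g(r,t)$ to eliminate the logarithmic integrals and collecting terms organizes the difference into four groups: a quadratic group $\big(\tfrac{1-r^{2}}{2}-\tfrac{1-r^{p}}{p}\big)\|u^{+}\|_{H^{s,2}}^{2}+\big(\tfrac{1-t^{2}}{2}-\tfrac{1-t^{p}}{p}\big)\|u^{-}\|_{H^{s,2}}^{2}$; a polynomial--logarithmic group $\tfrac{1}{p^{2}}\big(2(1-r^{p})+pr^{p}\log r^{2}\big)\int_{\mathbb{Z}^d}|u^{+}|^{p}\,d\mu$ plus its $t,u^{-}$ analogue; a coupling group $c_{K}(r,t)\,K(u)$ with $c_{K}(r,t)=-\tfrac{a(1-rt)}{2}+\tfrac{a}{2p}\big[(1-r^{p})+(1-t^{p})\big]$; and a Kirchhoff group $\tfrac{b}{4}(\Lambda^{2}-\lambda^{2})-\tfrac{b\Lambda}{p}\big[(1-r^{p})(A-\tfrac12 K(u))+(1-t^{p})(B-\tfrac12 K(u))\big]$, where $\lambda:=r^{2}A+t^{2}B-rtK(u)$.

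Finally I would prove each group nonnegative. The polynomial--logarithmic group is nonnegative by $2(1-\tau^{p})+p\tau^{p}\log\tau^{2}\ge 0$, already recorded in Lemma~\ref{lc}. The quadratic group is nonnegative by $\tfrac{1-\tau^{2}}{2}-\tfrac{1-\tau^{p}}{p}\ge 0$, strict for $\tau\neq1$; since $u^{\pm}\not\equiv0$ forces $\|u^{\pm}\|_{H^{s,2}}>0$ and $(r,t)\neq(1,1)$ forces $r\neq1$ or $t\neq1$, this group is strictly positive and supplies the strict inequality. The remaining two groups are the crux. For the coupling group, since $K(u)\le0$ it suffices to show $c_{K}(r,t)\le0$, i.e. $\tfrac1{2p}\big[(1-r^{p})+(1-t^{p})\big]\le\tfrac12(1-rt)$. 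For the Kirchhoff group, writing $\lambda=r^{2}(A-\tfrac12K(u))+t^{2}(B-\tfrac12K(u))+\tfrac{K(u)}{2}(r-t)^{2}$ and using $K(u)\le0$ yields $0\le\lambda\le r^{2}(A-\tfrac12K(u))+t^{2}(B-\tfrac12K(u))$, so the group dominates a quadratic form in the nonnegative variables $A-\tfrac12K(u)$ and $B-\tfrac12K(u)$ (whose sum is $\Lambda$) with diagonal coefficients $\tfrac{1-r^{4}}{4}-\tfrac{1-r^{p}}{p}$, $\tfrac{1-t^{4}}{4}-\tfrac{1-t^{p}}{p}$ and off-diagonal coefficient $\tfrac14(1-r^{2}t^{2})-\tfrac1{2p}\big[(1-r^{p})+(1-t^{p})\big]$.

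The main obstacle, and where the superlinearity is consumed, is the verification of these planar inequalities: $\tfrac1{2p}[(1-r^{p})+(1-t^{p})]\le\tfrac12(1-rt)$ together with nonnegativity of the quadratic form, whose diagonal coefficients are nonnegative by $\tfrac{1-\tau^{4}}{4}-\tfrac{1-\tau^{p}}{p}\ge0$ and whose off-diagonal coefficient must be shown nonnegative. Each reduces to showing that a smooth function on $(0,\infty)^{2}$ attains its global minimum value $0$ at $(r,t)=(1,1)$, which I would establish by checking that $(1,1)$ is the unique interior critical point, that the Hessian there is positive definite, and that the function is positive on the boundary and as $r^{2}+t^{2}\to\infty$; these hold for every $p>4$ and in particular under our hypothesis $p>6$. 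The one-variable estimates are exactly those of Lemma~\ref{lc}, so the genuinely new content is this pair of elementary but delicate two-variable inequalities arising from the Kirchhoff coupling term $K(u)$.
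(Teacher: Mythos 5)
Your proposal is correct and follows the same skeleton as the paper's proof: expand $J_{s,2}(u)-J_{s,2}(ru^{+}+tu^{-})$ via Proposition \ref{o}, insert the two Nehari identities $\langle J_{s,2}'(u),u^{\pm}\rangle=0$ weighted by $\frac{1-r^{p}}{p}$ and $\frac{1-t^{p}}{p}$ to eliminate the logarithmic integrals, and then show each residual group is nonnegative, with strictness coming from the quadratic group since $(r,t)\neq(1,1)$ and $u^{\pm}\not\equiv 0$. The one place where you genuinely diverge is the Kirchhoff group. The paper keeps the six $b$-terms separate and completes squares coefficient by coefficient, e.g.\ writing $(1-r^{3}t)-\frac{3(1-r^{p})}{p}-\frac{1-t^{p}}{p}=\frac{(r^{3}-t)^{2}}{2}+3\bigl(\frac{1-r^{6}}{6}-\frac{1-r^{p}}{p}\bigr)+\bigl(\frac{1-t^{2}}{2}-\frac{1-t^{p}}{p}\bigr)$, which is exactly where the hypothesis $p>6$ is consumed. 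Your substitution $X=\|\nabla^{s}u^{+}\|_{2}^{2}-\frac12K(u)$, $Y=\|\nabla^{s}u^{-}\|_{2}^{2}-\frac12K(u)$ together with the bound $0\le\lambda\le r^{2}X+t^{2}Y$ collapses all six terms into a three-coefficient quadratic form in the nonnegative variables $X,Y$, whose coefficients are nonnegative already for $p>4$; this is cleaner, covers the stated range $p>6$ a fortiori, and incidentally shows the lemma's conclusion does not actually need $p>6$. On the other hand, your proposed verification of the two planar inequalities by locating critical points and checking the Hessian and boundary behaviour is much heavier than necessary: both reduce to the one-line algebraic identities $1-rt-\frac{1-r^{p}}{p}-\frac{1-t^{p}}{p}=\frac{(r-t)^{2}}{2}+\bigl(\frac{1-r^{2}}{2}-\frac{1-r^{p}}{p}\bigr)+\bigl(\frac{1-t^{2}}{2}-\frac{1-t^{p}}{p}\bigr)$ and $\frac{1-r^{2}t^{2}}{2}-\frac{1-r^{p}}{p}-\frac{1-t^{p}}{p}=\frac{(r^{2}-t^{2})^{2}}{4}+\bigl(\frac{1-r^{4}}{4}-\frac{1-r^{p}}{p}\bigr)+\bigl(\frac{1-t^{4}}{4}-\frac{1-t^{p}}{p}\bigr)$, combined with the strict monotonicity of $x\mapsto\frac{1-a^{x}}{x}$ that you already invoke; I recommend using these identities, which is what the paper does.
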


\begin{proof}

For any $u \in \mathcal{M}_{s,2}$ and $r, t>0$, by Proposition \ref{o}, we obtain that
\begin{align*}
& J_{s,2}(u)-J_{s,2}\left(r u^{+}+t u^{-}\right) \\
= & \frac{1}{2}\left(\|u\|_{H^{s,2}}^{2}-\left\|r u^{+}+t u^{-}\right\|_{H^{s,2}}^{2}\right)+\frac{b}{4}\left(\|\nabla^s u\|_{2}^{4}-\left\|\nabla^s\left(r u^{+}+t u^{-}\right)\right\|_{2}^{4}\right) \\
& +\frac{2}{p^{2}} \int_{\mathbb{Z}^d}\left[|u|^{p}-\left|r u^{+}+t u^{-}\right|^{p}\right] \,d\mu-\frac{1}{p} \int_{\mathbb{Z}^d}\left[|u|^{p} \log u^{2}-\left|r u^{+}+t u^{-}\right|^{p} \log \left(r u^{+}+t u^{-}\right)^{2}\right] \,d\mu.
\\
= & \frac{1-r^2}{2}\left\|u^{+}\right\|_{H^{s,2}}^{2}+\frac{1-t^{2}}{2}\left\|u^{-}\right\|_{H^{s,2}}^{2}-\frac{a(1-rt)}{2}K(u)+\frac{b\left(1-r^{4}\right)}{4}\left\|\nabla^s u^{+}\right\|_{2}^{4}+\frac{b\left(1-t^{4}\right)}{4}\left\|\nabla^s u^{-}\right\|_{2}^{4} \\
& +\frac{b\left(1-r^2 t^{2}\right)}{2}\left\|\nabla^s u^{+}\right\|_{2}^{2}\left\|\nabla^s u^{-}\right\|_{2}^{2}-\frac{b(1-r^3t)}{2}K(u)\left\|\nabla^s u^{+}\right\|_{2}^{2}-\frac{b(1-rt^3)}{2}K(u)\left\|\nabla^s u^{-}\right\|_{2}^{2}\\&+\frac{b(1-r^2t^2)}{4}K^2(u)+\frac{2}{p^{2}} \int_{\mathbb{Z}^d}\left[\left|u^{+}\right|^{p}-\left|r u^{+}\right|^{p}+\left|u^{-}\right|^{p}-\left|t u^{-}\right|^{p}\right] \,d\mu\\
& -\frac{1}{p} \int_{\mathbb{Z}^d}\left[\left|u^{+}\right|^{p} \log \left(u^{+}\right)^{2}-\left|r u^{+}\right|^{p} \log \left(u^{+}\right)^{2}-\left|r u^{+}\right|^{p} \log r^2\right] \,d\mu\\
& -\frac{1}{p} \int_{\mathbb{Z}^d}\left[\left|u^{-}\right|^{p} \log \left(u^{-}\right)^{2}-\left|t u^{-}\right|^{p} \log \left(u^{-}\right)^{2}-\left|t u^{-}\right|^{p} \log t^{2}\right] \,d\mu\\
= & \frac{1-r^{p}}{p}\left\langle J_{s,2}^{\prime}(u), u^{+}\right\rangle+\frac{1-t^{p}}{p}\left\langle J_{s,2}^{\prime}(u), u^{-}\right\rangle+\left(\frac{1-r^2}{2}-\frac{1-r^{p}}{p}\right)\left\|u^{+}\right\|_{H^{s,2}}^{2} \\
& +\left(\frac{1-t^{2}}{2}-\frac{1-t^{p}}{p}\right)\left\|u^{-}\right\|_{H^{s,2}}^{2}+b\left[\left(\frac{1-r^{4}}{4}-\frac{1-r^{p}}{p}\right)\left\|\nabla^s u^{+}\right\|_{2}^{4}\right. \\
& \left.+\left(\frac{1-t^{4}}{4}-\frac{1-t^{p}}{p}\right)\left\|\nabla^s u^{-}\right\|_{2}^{4}+\left(\frac{1-r^2 t^{2}}{2}-\frac{1-r^{p}}{p}-\frac{1-t^{p}}{p}\right)\left\|\nabla^s u^{+}\right\|_{2}^{2}\left\|\nabla^s u^{-}\right\|_{2}^{2}\right]\\&+\frac{b}{2}\left(\frac{1-r^2 t^{2}}{2}-\frac{1-r^{p}}{p}-\frac{1-t^{p}}{p}\right)K^2(u)-\frac{a}{2}\left((1-rt)-\frac{1-r^{p}}{p}-\frac{1-t^{p}}{p}\right)K(u)\\&-\frac{b}{2}\left((1-r^3t)-\frac{3(1-r^{p})}{p}-\frac{1-t^{p}}{p}\right)K(u)\|\nabla^s u^+\|^2_2\\&-\frac{b}{2}\left((1-rt^3)-\frac{1-r^{p}}{p}-\frac{3(1-t^{p})}{p}\right)K(u)\|\nabla^s u^-\|^2_2\\
& +\frac{2\left(1-r^{p}\right)+p r^{p} \log r^2}{p^{2}} \int_{\mathbb{Z}^d}\left|u^{+}\right|^{p} \,d\mu+\frac{2\left(1-t^{p}\right)+p t^{p} \log t^{2}}{p^{2}} \int_{\mathbb{Z}^d}\left|u^{-}\right|^{p} \,d\mu
\\
\geq & \frac{1-r^{p}}{p}\left\langle J_{s,2}^{\prime}(u), u^{+}\right\rangle+\frac{1-t^{p}}{p}\left\langle J_{s,2}^{\prime}(u), u^{-}\right\rangle+\left(\frac{1-r^2}{2}-\frac{1-r^{p}}{p}\right)\left\|u^{+}\right\|_{H^{s,2}}^{2} \\
& +\left(\frac{1-t^{2}}{2}-\frac{1-t^{p}}{p}\right)\left\|u^{-}\right\|_{H^{s,2}}^{2}+b\left[\left(\frac{1-r^{4}}{4}-\frac{1-r^{p}}{p}\right)\left\|\nabla^s u^{+}\right\|_{2}^{4}\right. \\
& \left.+\left(\frac{1-t^{4}}{4}-\frac{1-t^{p}}{p}\right)\left\|\nabla^s u^{-}\right\|_{2}^{4}+\left(\frac{1-r^2 t^{2}}{2}-\frac{1-r^{p}}{p}-\frac{1-t^{p}}{p}\right)\left\|\nabla^s u^{+}\right\|_{2}^{2}\left\|\nabla^s u^{-}\right\|_{2}^{2}\right]\\&+\frac{b}{2}\left(\frac{1-r^2 t^{2}}{2}-\frac{1-r^{p}}{p}-\frac{1-t^{p}}{p}\right)K^2(u)-\frac{a}{2}\left((1-rt)-\frac{1-r^{p}}{p}-\frac{1-t^{p}}{p}\right)K(u)\\&-\frac{b}{2}\left((1-r^3t)-\frac{3(1-r^{p})}{p}-\frac{1-t^{p}}{p}\right)K(u)\|\nabla^s u^+\|^2_2\\&-\frac{b}{2}\left((1-rt^3)-\frac{1-r^{p}}{p}-\frac{3(1-t^{p})}{p}\right)K(u)\|\nabla^s u^-\|^2_2\\ =&\left(\frac{1-r^2}{2}-\frac{1-r^{p}}{p}\right)\left\|u^{+}\right\|_{H^{s,2}}^{2} +\left(\frac{1-t^{2}}{2}-\frac{1-t^{p}}{p}\right)\left\|u^{-}\right\|_{H^{s,2}}^{2}\\&+b\left[\left(\frac{1-r^{4}}{4}-\frac{1-r^{p}}{p}\right)\left\|\nabla^s u^{+}\right\|_{2}^{4}+\left(\frac{1-t^{4}}{4}-\frac{1-t^{p}}{p}\right)\left\|\nabla^s u^{-}\right\|_{2}^{4}\right]\\ &+b\left[\frac{\left(r^2-t^{2}\right)^{2}}{4}+\left(\frac{1-r^{4}}{4}-\frac{1-r^{p}}{p}\right)+\left(\frac{1-t^{4}}{4}-\frac{1-t^{p}}{p}\right)\right]\left\|\nabla^s u^{+}\right\|_{2}^{2}\left\|\nabla^s u^{-}\right\|_{2}^{2}\\&+\frac{b}{2}\left[\frac{\left(r^2-t^{2}\right)^{2}}{4}+\left(\frac{1-r^{4}}{4}-\frac{1-r^{p}}{p}\right)+\left(\frac{1-t^{4}}{4}-\frac{1-t^{p}}{p}\right)\right]K^2(u)\\&-\frac{a}{2}\left[\frac{\left(r-t\right)^{2}}{2}+\left(\frac{1-r^2}{2}-\frac{1-r^{p}}{p}\right)+\left(\frac{1-t^{2}}{2}-\frac{1-t^{p}}{p}\right)\right]K(u)\\&-\frac{b}{2}\left[\frac{\left(r^{3}-t\right)^{2}}{2}+3\left(\frac{1-r^{6}}{6}-\frac{1-r^{p}}{p}\right)+\left(\frac{1-t^{2}}{2}-\frac{1-t^{p}}{p}\right)\right]K(u)\|\nabla^s u^+\|^2_2\\&-\frac{b}{2}\left[\frac{\left(r-t^{3}\right)^{2}}{2}+\left(\frac{1-r^2}{2}-\frac{1-r^{p}}{p}\right)+3\left(\frac{1-t^{6}}{6}-\frac{1-t^{p}}{p}\right)\right]K(u)\|\nabla^s u^-\|^2_2\\> &0,
\end{align*}
where we have used the facts that 
$$2\left(1-x^{p}\right)+p x^{p} \log x^{2}\geq 0, \quad x \in(0,\infty)$$  in the first inequality, and $p>6$, $K(u)<0$,
$$g(x)=\frac{1-a^{x}}{x}$$ is a strictly decreasing function in $(0,1)\cup(1,\infty)$ in the last inequality. The proof is completed.

\end{proof}

\begin{lm}\label{4}
    
If $u \in H^{s,2}$ with $u^{ \pm} \neq 0$, then there exists a unique pair $\left(r_{u}, t_{u}\right)\in (0,\infty)\times(0,\infty)$ such that $r_{u} u^{+}+t_{u} u^{-} \in \mathcal{M}_{s,2}$.
\end{lm}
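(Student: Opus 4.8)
The plan is to fix $u\in H^{s,2}$ with $u^\pm\neq 0$ and study the map
$\psi:(0,\infty)\times(0,\infty)\to\mathbb{R}$ defined by $\psi(r,t)=J_{s,2}(ru^++tu^-)$, whose critical points in the open quadrant are exactly the pairs $(r,t)$ for which $ru^++tu^-\in\mathcal M_{s,2}$ (this uses that $\langle J_{s,2}'(ru^++tu^-),ru^+\rangle=r\,\partial_r\psi$ and $\langle J_{s,2}'(ru^++tu^-),tu^-\rangle=t\,\partial_t\psi$, together with Proposition \ref{o} which expresses all the gradient-form quantities in terms of $\|\nabla^s u^\pm\|_2^2$ and $K(u)$). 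So the task splits into \emph{existence} of such a pair and \emph{uniqueness}.

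For existence, first I would use the logarithmic estimate \eqref{1.2} exactly as in Lemma \ref{lm}(i): since $\|u^\pm\|_q\le\|u^\pm\|_{H^{s,2}}$ and $q>p>6>4$, for $r,t$ near the origin the quadratic terms $\tfrac{r^2}{2}\|u^+\|_{H^{s,2}}^2+\tfrac{t^2}{2}\|u^-\|_{H^{s,2}}^2$ (note $K(u)\le0$ only helps, since $-\tfrac{a(rt)}{2}K(u)\ge0$) dominate, so $\psi(r,t)>0$ on a small quarter-circle $r^2+t^2=\rho^2$. On the other hand, along any ray the computation \eqref{23} shows $\psi(r,t)\to-\infty$ as $r+t\to\infty$, and more precisely $\psi$ is negative outside a large ball. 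Hence $\sup_{(0,\infty)^2}\psi$ is attained at an interior point $(r_u,t_u)$ — one must check the max is not attained on the boundary axes $r=0$ or $t=0$, which follows because on those axes $\psi$ involves only $u^-$ (resp. $u^+$) and is bounded, whereas $\psi(r_u,t_u)\ge\psi$ on the small quarter-circle $>0$; then $\nabla\psi(r_u,t_u)=0$ gives $r_uu^++t_uu^-\in\mathcal M_{s,2}$.

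For uniqueness, suppose $(r_1,t_1)$ and $(r_2,t_2)$ both land on $\mathcal M_{s,2}$. If $(r_1,t_1)=(1,1)$ is one of them, then $u\in\mathcal M_{s,2}$ and Lemma \ref{l3} directly gives $J_{s,2}(u)>J_{s,2}(ru^++tu^-)$ for every $(r,t)\neq(1,1)$, contradicting that the other pair is also a critical point (a critical point of $\psi$ that is strictly below the value at $(1,1)$ cannot itself be a max, but actually the cleaner route: apply Lemma \ref{l3} with $u$ replaced by $w:=r_1u^++t_1u^-\in\mathcal M_{s,2}$, whose positive and negative parts are $r_1u^+$ and $t_1u^-$; then for the rescaling factors $(r_2/r_1,t_2/t_1)\neq(1,1)$ one gets $J_{s,2}(w)>J_{s,2}((r_2/r_1)\cdot r_1u^++(t_2/t_1)\cdot t_1u^-)=J_{s,2}(r_2u^++t_2u^-)$, and symmetrically with the roles swapped one gets the reverse strict inequality — impossible). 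Thus $(r_1,t_1)=(r_2,t_2)$.

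The main obstacle is the boundary/compactness bookkeeping in the existence step: one must argue carefully that the supremum of $\psi$ over the open quadrant is genuinely attained in the interior and not ``at infinity'' or on an axis. The decay $\psi\to-\infty$ uniformly as $|(r,t)|\to\infty$ needs the coefficient of $r^p$ (resp. $t^p$) to be eventually negative, which is where $\int|u^\pm|^p\,d\mu>0$ (guaranteed by $u^\pm\neq0$) is used, and one should note the $b$-terms $\tfrac{b}{4}\|\nabla^s(ru^++tu^-)\|_2^4$ are quartic hence dominated by the $r^p,t^p$ logarithmic terms for $p>6$ (indeed $p>4$ suffices here, the stronger hypothesis $p>6$ being needed only for Lemma \ref{l3}); the cross term $-rtK(u)\ge0$ is harmless throughout. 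Once attainment in the interior is secured, the rest is the elementary translation between critical points of $\psi$ and membership in $\mathcal M_{s,2}$, plus the clean uniqueness argument via Lemma \ref{l3}.
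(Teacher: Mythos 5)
Your overall strategy for existence differs from the paper's: you maximize $\psi(r,t)=J_{s,2}(ru^{+}+tu^{-})$ over the open quadrant, whereas the paper works directly with $\varphi_{1}(r,t)=\langle J_{s,2}'(ru^{+}+tu^{-}),ru^{+}\rangle$ and $\varphi_{2}(r,t)=\langle J_{s,2}'(ru^{+}+tu^{-}),tu^{-}\rangle$, shows sign changes on the diagonal, propagates them to a square $[r_{1},R_{1}]^{2}$ via monotonicity in the off‑variable (all the $K(u)$ cross terms have a favorable sign), and invokes the Poincar\'e--Miranda theorem. Your uniqueness argument (apply Lemma \ref{l3} to $w=r_{1}u^{+}+t_{1}u^{-}$ with scaling factors $r_{2}/r_{1}$, $t_{2}/t_{1}$ and derive two opposite strict inequalities) is exactly the paper's, and is correct; it even handles the case $r_1=r_2$, $t_1\neq t_2$ more cleanly than the paper's phrasing.

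There is, however, a genuine gap in your existence step, precisely at the point you flag as the ``main obstacle.'' To rule out that the maximum of $\psi$ over the closed quadrant (intersected with a large ball) sits on an axis, you argue that $\psi$ restricted to the axes ``is bounded'' while $\psi$ is positive on a small interior quarter‑circle. That does not suffice: $\psi(\cdot,0)=J_{s,2}(ru^{+})$ is bounded but its supremum is itself strictly positive (indeed $\geq\hat c$ by Lemma \ref{lz} and Lemma \ref{kq}), so boundedness on the axis is compatible with the global maximum being attained there. The correct way to exclude the axes is quantitative: using Proposition \ref{o} and the disjointness of the supports of $u^{\pm}$ one has, for $r,t>0$,
\begin{align*}
J_{s,2}(ru^{+}+tu^{-})&=J_{s,2}(ru^{+})+J_{s,2}(tu^{-})-\frac{a}{2}rtK(u)+\frac{b}{4}r^{2}t^{2}K^{2}(u)\\
&\quad+\frac{b}{2}\left\|\nabla^{s}(ru^{+})\right\|_{2}^{2}\left\|\nabla^{s}(tu^{-})\right\|_{2}^{2}-\frac{b}{2}rtK(u)\left(\left\|\nabla^{s}(ru^{+})\right\|_{2}^{2}+\left\|\nabla^{s}(tu^{-})\right\|_{2}^{2}\right)\\
&\geq J_{s,2}(ru^{+})+J_{s,2}(tu^{-}),
\end{align*}
since $K(u)<0$ when $u^{\pm}\neq0$ (this identity is used in the paper's proof of Theorem \ref{t3}). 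Taking suprema and using that $\sup_{t>0}J_{s,2}(tu^{-})>0$ (Lemma \ref{lz}) gives $\sup_{r,t>0}\psi(r,t)\geq\sup_{r>0}J_{s,2}(ru^{+})+\sup_{t>0}J_{s,2}(tu^{-})>\sup_{r>0}\psi(r,0)$, and symmetrically for the other axis; only then can you conclude the maximizer is interior and is a critical point of $\psi$, hence lies in $\mathcal{M}_{s,2}$. With that repair your route is a valid and arguably more transparent alternative to the Miranda argument; without it the existence step is incomplete.
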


\begin{proof}
    
For $r, t>0$, we denote
\begin{eqnarray*}
\varphi_{1}(r, t) = \left\langle J_{s,2}^{\prime}\left(r u^{+}+t u^{-}\right),ru^{+}\right\rangle,\qquad \varphi_{2}(r, t)=\left\langle J_{s,2}^{\prime}\left(r u^{+}+t u^{-}\right),t u^{-}\right\rangle.  
\end{eqnarray*}
By Proposition \ref{o}, we obtain that
\begin{equation}\label{2.7}
\begin{aligned}
\varphi_{1}(r, t) =&\left\langle J_{s,2}^{\prime}\left(r u^{+}+t u^{-}\right),ru^{+}\right\rangle\\=& r^2\|u^{+}\|_{H^{s,2}}^{2}+b r^{4}\left\|\nabla^s u^{+}\right\|_{2}^{4}-\int_{\mathbb{Z}^d}\left|r u^{+}\right|^{p} \log \left(r u^{+}\right)^{2}\,d\mu-\frac{a}{2}rtK(u)+\frac{b}{2}r^2t^2K^2(u) 
\\&+b r^2 t^{2}\left\|\nabla^s u^{+}\right\|_{2}^{2}\left\|\nabla^s u^{-}\right\|_{2}^{2}-\frac{b}{2}rtK(u)\left(r^2\left\|\nabla^s u^{+}\right\|_{2}^{2}+t^2\left\|\nabla^s u^{-}\right\|_{2}^{2}\right)
-br^3tK(u)\|\nabla^s u^{+}\|^2_2,
\end{aligned}
\end{equation}
and
\begin{equation}\label{2.8}
\begin{aligned}
\varphi_{2}(r, t) =&\left\langle J_{s,2}^{\prime}\left(r u^{+}+t u^{-}\right),t u^{-}\right\rangle\\=& t^{2}\|u^{-}\|_{H^{s,2}}^{2}+b t^{4}\left\|\nabla^s u^{-}\right\|_{2}^{4}-\int_{\mathbb{Z}^d}\left|t u^{-}\right|^{p} \log \left(t u^{-}\right)^{2}\,d\mu-\frac{a}{2}rtK(u)+\frac{b}{2}r^2t^2K^2(u) 
\\&+b r^2 t^{2}\left\|\nabla^s u^{+}\right\|_{2}^{2}\left\|\nabla^s u^{-}\right\|_{2}^{2}-\frac{b}{2}rtK(u)\left(r^2\left\|\nabla^s u^{+}\right\|_{2}^{2}+t^2\left\|\nabla^s u^{-}\right\|_{2}^{2}\right)
-brt^3K(u)\|\nabla^s u^{-}\|^2_2.
\end{aligned}
\end{equation}
Let $t=r$ in (\ref{2.7}) and (\ref{2.8}), then
$$
\begin{aligned}
\varphi_{1}(r, r) =& r^2\|u^{+}\|_{H^{s,2}}^{2}+b r^{4}\left\|\nabla^s u^{+}\right\|_{2}^{4}-r^p\int_{\mathbb{Z}^d}\left| u^{+}\right|^{p} \log \left( u^{+}\right)^{2}\,d\mu-r^p\log r^2\int_{\mathbb{Z}^d}\left|u^{+}\right|^{p}\,d\mu-\frac{a}{2}r^2K(u)\\&+\frac{b}{2}r^4K^2(u) 
+b r^{4} \left\|\nabla^s u^{+}\right\|_{2}^{2}\left\|\nabla^s u^{-}\right\|_{2}^{2}-\frac{b}{2}r^4K(u)\left(\left\|\nabla^s u^{+}\right\|_{2}^{2}+\left\|\nabla^s u^{-}\right\|_{2}^{2}\right)
-br^4K(u)\|\nabla^s u^{+}\|^2_2,
\end{aligned}
$$
and
$$
\begin{aligned}
\varphi_{2}(r, r) =& r^2\|u^{-}\|_{H^{s,2}}^{2}+b r^{4}\left\|\nabla^s u^{-}\right\|_{2}^{4}-r^p\int_{\mathbb{Z}^d}\left| u^{-}\right|^{p} \log \left( u^{-}\right)^{2}\,d\mu-r^p\log r^2\int_{\mathbb{Z}^d}\left|u^{-}\right|^{p}\,d\mu-\frac{a}{2}r^2K(u)\\&+\frac{b}{2}r^4K^2(u) 
+b r^{4} \left\|\nabla^s u^{+}\right\|_{2}^{2}\left\|\nabla^s u^{-}\right\|_{2}^{2}-\frac{b}{2}r^4K(u)\left(\left\|\nabla^s u^{+}\right\|_{2}^{2}+\left\|\nabla^s u^{-}\right\|_{2}^{2}\right)
-br^4K(u)\|\nabla^s u^{-}\|^2_2.
\end{aligned}
$$
Since $p>6$ and $K(u)< 0$, we have that $\varphi_{1}(r, r)>0$ and $\varphi_{2}(r, r)>0$ for $r>0$ small enough and $\varphi_{1}(r, r)<0$ and $\varphi_{2}(r, r)<0$ for $r>0$ large enough. Therefore, there exist $r_1$ and $R_1$ with $0<r_1<R_1$ such that
\begin{equation}\label{0.5}
\varphi_{1}(r_1, r_1)>0,\quad \varphi_{2}(r_1, r_1)>0,\quad \varphi_{1}(R_1, R_1)<0,\quad \varphi_{2}(R_1, R_1)<0.
\end{equation}
By (\ref{2.7}), (\ref{2.8}) and (\ref{0.5}), we obtain that
$$
\begin{aligned}
&\varphi_{1}(r_1, t)>0, \qquad\varphi_{1}(R_1, t)<0,\quad t \in[r_1, R_1], \\
& \varphi_{2}(r, r_1)>0,\qquad \varphi_{2}(r, R_1)<0,\quad r\in[r_1, R_1].
\end{aligned}
$$
By the Miranda's theorem \cite{K}, there exist $r_{u}, t_{u} \in(r_1, R_1)$ such that $\varphi_{1}\left(r_{u}, t_{u}\right)=\varphi_{2}\left(r_{u}, t_{u}\right)=0$, which implies that $r_{u} u^{+}+t_{u} u^{-} \in \mathcal{M}_{s,2}$.

Now we verify the uniqueness of the pair $\left(r_{u}, t_{u}\right)$. Otherwise, there exist $\left(r_1, t_{1}\right)$ and $\left(r_{2}, t_{2}\right)$ with $r_1\neq r_2$ and $t_1\neq t_2$ such that $r_{1} u^{+}+t_{1} u^{-} \in \mathcal{M}_{s,2}$ and $r_{2} u^{+}+t_{2} u^{-} \in \mathcal{M}_{s,2}$.

Let $r=\frac{r_2}{r_1}$ and $t=\frac{t_2}{t_1}$. Clearly $r\neq 1$ and $t\neq 1$. Then it follows from Lemma \ref{l3} that
$$
J_{s,2}(r_2u^++t_2u^-) =J_{s,2} \left(r(r_1u^+)+t(t_1u^-)\right)<J_{s,2} (r_1u^++t_1u^-),
$$
and
$$
  J_{s,2}(r_1u^++t_1u^-) =J_{s,2} \left(\frac{1}{r}(r_2u^+)+\frac{1}{t}(t_2u^-)\right)<J_{s,2} (r_2u^++t_2u^-).  
$$
This is impossible. Thus there exists a unique pair $(r_u, t_u)\in (0,\infty)\times(0,\infty)$ such that $r_uu^++t_uu^-\in\mathcal{M}_{s,2}.$

\end{proof}

\begin{lm}\label{l5}
Let $u \in H^{s,2}$ with $u^{ \pm} \neq 0$ such that $\langle J_{s,2}^{\prime}(u),u^{ \pm}\rangle \leq 0$. Then the unique pair $\left(r_{u}, t_{u}\right)$ appeared in Lemma \ref{4} satisfies $r_{u}, t_{u} \in(0,1]$.
\end{lm}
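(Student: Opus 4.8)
The plan is to exploit the existence and uniqueness of the pair $(r_u,t_u)$ already guaranteed by Lemma \ref{4}, together with the monotonicity structure encoded in the functions $\varphi_1,\varphi_2$ from that proof. Since $r_u u^+ + t_u u^- \in \mathcal{M}_{s,2}$, by definition we have $\varphi_1(r_u,t_u)=\varphi_2(r_u,t_u)=0$, where $\varphi_1,\varphi_2$ are exactly the expressions displayed in \eqref{2.7} and \eqref{2.8}. The hypothesis $\langle J_{s,2}'(u),u^{\pm}\rangle \le 0$ says precisely that $\varphi_1(1,1)\le 0$ and $\varphi_2(1,1)\le 0$. So the task reduces to showing that moving from the point $(1,1)$ to the unique zero $(r_u,t_u)$ forces both coordinates to weakly decrease.

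First I would argue by contradiction and symmetry: suppose $r_u>1$ or $t_u>1$, and without loss of generality assume $r_u \ge t_u$ with $r_u>1$ (the case $t_u \ge r_u$, $t_u>1$ being symmetric after swapping the roles of $u^+$ and $u^-$). From $\varphi_1(r_u,t_u)=0$ I would divide the expression \eqref{2.7} by $r_u^{p}$ and isolate the logarithmic term, rewriting the equation $\varphi_1(r_u,t_u)=0$ so that the left-hand side collects all the quadratic- and quartic-in-$r$ terms divided by a suitable power of $r_u$. The key point is that after dividing by $r_u^p$, every coefficient of the form $r_u^{2-p}$, $r_u^{4-p}$, $r_u^{2}t_u^{2}r_u^{-p}$, etc., is \emph{decreasing} in $r_u$ because $p>6>4$, while the term $\int |u^+|^p\log(u^+)^2\,d\mu + \log r_u^2\int|u^+|^p\,d\mu$ is handled separately. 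Since $K(u)\le 0$ by Proposition \ref{o}(i) and $r_u\ge t_u$, the cross terms involving $-K(u)$ all carry a favorable sign, so each mixed term is controlled.

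The heart of the argument is a comparison inequality: using $r_u \ge t_u$ and $r_u>1$ together with $p>6$, I would show that
$$
0=\frac{\varphi_1(r_u,t_u)}{r_u^{p}} < \frac{\varphi_1(1,1)}{1} \le 0,
$$
a contradiction, where the strict inequality comes from the strict monotonicity of each $r_u^{k-p}$ factor ($k\in\{2,4\}$) and of the log term. Concretely, dividing $\varphi_1$ by $r_u^p$ turns the norm terms $r_u^2\|u^+\|^2_{H^{s,2}}$ and $br_u^4\|\nabla^s u^+\|_2^4$ into $r_u^{2-p}\|u^+\|^2_{H^{s,2}}$ and $br_u^{4-p}\|\nabla^s u^+\|_2^4$, both strictly smaller than their values at $r_u=1$; the constant logarithmic piece $-\int|u^+|^p\log(u^+)^2\,d\mu$ is unchanged, while $-\log r_u^2\int|u^+|^p\,d\mu<0$; and the $K(u)$-terms, being nonnegative multiples of $-K(u)\ge 0$ with decreasing $r,t$-weights under $r_u\ge t_u$, also decrease. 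Summing, $r_u^{-p}\varphi_1(r_u,t_u)<\varphi_1(1,1)\le 0$, contradicting $\varphi_1(r_u,t_u)=0$. Hence $r_u\le 1$, and the symmetric computation with $\varphi_2$ gives $t_u\le 1$.

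I expect the main obstacle to be the bookkeeping of the $K(u)$-weighted cross terms: there are several of them (those multiplying $\|\nabla^s u^+\|_2^2$, $\|\nabla^s u^-\|_2^2$, $\|\nabla^s u^+\|_2^2\|\nabla^s u^-\|_2^2$, and $K^2(u)$), and I must verify that under the normalization by $r_u^p$ and the ordering $r_u\ge t_u$ every one of them has the correct monotonicity so that the strict inequality survives termwise. The sign $K(u)\le 0$ from Proposition \ref{o}(i) is what makes these terms cooperate, and the choice to divide by $r_u^p$ rather than some other power is dictated by matching the leading growth of the nonlinearity; getting this normalization right is the delicate step.
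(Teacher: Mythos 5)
Your proposal is correct and takes essentially the same route as the paper: the same reduction to $r_u\ge t_u$, $r_u>1$, the same use of $t_u\le r_u$ and $K(u)<0$ to majorize all the cross terms, and the same comparison of $\varphi_1(r_u,t_u)=0$ against $\varphi_1(1,1)\le 0$ with the strict drop coming from $r_u^{2-p}<1$, $r_u^{4-p}<1$ and the extra $-\log r_u^{2}\int|u^+|^p\,d\mu<0$ term. The paper merely packages the same algebra as ``add the two isolated-log inequalities and read off the sign of the coefficients $r_u^{2}-r_u^{p}$, $r_u^{4}-r_u^{p}$'' instead of dividing by $r_u^{p}$.
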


\begin{proof}
By Lemma \ref{4}, there exists a unique pair $(r_u,t_u)\in(0,\infty)\times(0,\infty)$ such that $r_uu^++t_uu^-\in \mathcal{M}_{s,2}$. Without loss of generality, we assume that $0<t_{u} \leq r_{u}$. Since $r_{u} u^{+}+t_{u} u^{-} \in \mathcal{M}_{s,2}$, we have that
$$
\begin{aligned}
&\langle J_{s,2}^{\prime}(r_uu^++t_uu^-),r_uu^{+}\rangle\\=&r_u^{2}\|u^{+}\|_{H^{s,2}}^{2}+b r_u^{4}\left\|\nabla^s u^{+}\right\|_{2}^{4}-\int_{\mathbb{Z}^d}\left|r_u u^{+}\right|^{p} \log \left(r_u u^{+}\right)^{2}\,d\mu-\frac{a}{2}r_ut_uK(u)+\frac{b}{2}r_u^2t_u^2K^2(u)
\\&+b r_u^{2} t_u^{2}\left\|\nabla^s u^{+}\right\|_{2}^{2}\left\|\nabla^s u^{-}\right\|_{2}^{2}-\frac{b}{2}r_ut_uK(u)\left(r_u^2\left\|\nabla^s u^{+}\right\|_{2}^{2}+t_u^2\left\|\nabla^s u^{-}\right\|_{2}^{2}\right)
-br_u^3t_uK(u)\|\nabla^s u^{+}\|^2_2\\=& 0.
\end{aligned}
$$
Namely
\begin{align}\label{28}
\int_{\mathbb{Z}^d}\left|r_u u^{+}\right|^{p} \log \left(r_u u^{+}\right)^{2}\,d\mu=\nonumber&r_u^{2}\|u^{+}\|_{H^{s,2}}^{2}+b r_u^{4}\left\|\nabla^s u^{+}\right\|_{2}^{4}-\frac{a}{2}r_ut_uK(u)+\frac{b}{2}r_u^2t_u^2K^2(u)
\\&\nonumber+b r_u^{2} t_u^{2}\left\|\nabla^s u^{+}\right\|_{2}^{2}\left\|\nabla^s u^{-}\right\|_{2}^{2}-\frac{b}{2}r_ut_uK(u)\left(r_u^2\left\|\nabla^s u^{+}\right\|_{2}^{2}+t_u^2\left\|\nabla^s u^{-}\right\|_{2}^{2}\right)
\\&\nonumber-br_u^3t_uK(u)\|\nabla^s u^{+}\|^2_2\\ \leq &r_u^{2}\|u^{+}\|_{H^{s,2}}^{2}+b r_u^{4}\left\|\nabla^s u^{+}\right\|_{2}^{4}-\frac{a}{2}r^2_uK(u)+\frac{b}{2}r_u^4K^2(u)
\\&\nonumber+b r_u^{4} \left\|\nabla^s u^{+}\right\|_{2}^{2}\left\|\nabla^s u^{-}\right\|_{2}^{2}-\frac{b}{2}r^4_uK(u)\left(\left\|\nabla^s u^{+}\right\|_{2}^{2}+\left\|\nabla^s u^{-}\right\|_{2}^{2}\right)
\\&\nonumber-br_u^4K(u)\|\nabla^s u^{+}\|^2_2,
\end{align}    
where we have used the fact that $K(u)<0$.
Moreover, we have that
$$
\begin{aligned}
\langle J_{s,2}^{\prime}(u),u^{+}\rangle=&\|u^{+}\|_{H^{s,2}}^{2}+b\left\|\nabla^s u^{+}\right\|_{2}^{4}-\int_{\mathbb{Z}^d}\left|u^{+}\right|^{p} \log \left(u^{+}\right)^{2}\,d\mu-\frac{a}{2}K(u)+\frac{b}{2}K^2(u)
\\&+b\left\|\nabla^s u^{+}\right\|_{2}^{2}\left\|\nabla^s u^{-}\right\|_{2}^{2}-\frac{b}{2}K(u)\left(\left\|\nabla^s u^{+}\right\|_{2}^{2}+\left\|\nabla^s u^{-}\right\|_{2}^{2}\right)
-bK(u)\|\nabla^s u^{+}\|^2_2\\ \leq &0.
\end{aligned}
$$
As a consequence,
$$
\begin{aligned}
\int_{\mathbb{Z}^d}\left|u^{+}\right|^{p} \log \left(u^{+}\right)^{2}\,d\mu\geq &\|u^{+}\|_{H^{s,2}}^{2}+b\left\|\nabla^s u^{+}\right\|_{2}^{4}-\frac{a}{2}K(u)+\frac{b}{2}K^2(u)
+b\left\|\nabla^s u^{+}\right\|_{2}^{2}\left\|\nabla^s u^{-}\right\|_{2}^{2}\\&-\frac{b}{2}K(u)\left(\left\|\nabla^s u^{+}\right\|_{2}^{2}+\left\|\nabla^s u^{-}\right\|_{2}^{2}\right)
-bK(u)\|\nabla^s u^{+}\|^2_2.
\end{aligned}
$$
Multiplying the previous inequality by $-r_u^p$, then
\begin{equation}\label{3.5}
 \begin{aligned}
-\int_{\mathbb{Z}^d}\left|r_uu^{+}\right|^{p} \log \left(u^{+}\right)^{2}\,d\mu\leq &-r_u^p\|u^{+}\|_{H^{s,2}}^{2}-br_u^p\left\|\nabla^s u^{+}\right\|_{2}^{4}+\frac{a}{2}r_u^pK(u)-\frac{b}{2}r_u^pK^2(u)
\\&-br_u^p\left\|\nabla^s u^{+}\right\|_{2}^{2}\left\|\nabla^s u^{-}\right\|_{2}^{2}+\frac{b}{2}r_u^pK(u)\left(\left\|\nabla^s u^{+}\right\|_{2}^{2}+\left\|\nabla^s u^{-}\right\|_{2}^{2}\right)
\\&+br_u^pK(u)\|\nabla^s u^{+}\|^2_2.
\end{aligned}   
\end{equation}
We claim that $r_u\leq 1$. In fact, if $r_u>1$, then
by adding (\ref{28}) and (\ref{3.5}), we get that
$$
\begin{aligned}
0<&r_u^p\log r_u^{2}\int_{\mathbb{Z}^d}\left|u^{+}\right|^{p}\,d\mu\\\leq &(r_u^2-r_u^p)\|u^{+}\|_{H^{s,2}}^{2}+b(r_u^4-r_u^p)\left\|\nabla^s u^{+}\right\|_{2}^{4}-\frac{a}{2}(r_u^2-r_u^p)K(u)+\frac{b}{2}(r_u^4-r_u^p)K^2(u)
\\&+b(r_u^4-r_u^p)\left\|\nabla^s u^{+}\right\|_{2}^{2}\left\|\nabla^s u^{-}\right\|_{2}^{2}-\frac{b}{2}(r_u^4-r_u^p)K(u)\left(\left\|\nabla^s u^{+}\right\|_{2}^{2}+\left\|\nabla^s u^{-}\right\|_{2}^{2}\right)
\\&-b(r_u^4-r_u^p)K(u)\|\nabla^s u^{+}\|^2_2\\ <& 0,
\end{aligned} 
$$
which is a contradiction. Therefore we obtain that $0<t_u\leq r_u\leq 1.$ 
\end{proof}

Now we prove that the minimizer of $J_{s,2}$ on $\mathcal{M}_{s,2}$ can be achieved. We denote
$$m=\inf_{u \in \mathcal{M}_{s,2}} J_{s,2}(u).$$

\begin{lm}\label{l9}
Let $p>6$ and $(h_1)$-$(h_2)$ hold. Then there exists $u\in \mathcal{M}_{s,2}$ such that $J_{s,2}(u)=m>0$.
\end{lm}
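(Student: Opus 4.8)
The plan is to extract a minimizer directly from a minimizing sequence via the compact embedding, and then to correct the weak limit by the rescaling furnished by Lemma \ref{4}. Take $\{u_k\}\subset\mathcal{M}_{s,2}$ with $J_{s,2}(u_k)\to m$. Since each $u_k\in\mathcal{M}_{s,2}$ satisfies $\langle J_{s,2}'(u_k),u_k\rangle=\langle J_{s,2}'(u_k),u_k^+\rangle+\langle J_{s,2}'(u_k),u_k^-\rangle=0$, the computation of Lemma \ref{lb} gives $J_{s,2}(u_k)=J_{s,2}(u_k)-\frac1p\langle J_{s,2}'(u_k),u_k\rangle\ge(\frac12-\frac1p)\|u_k\|_{H^{s,2}}^2$, so $\{u_k\}$ is bounded. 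By Lemma \ref{lg}, up to a subsequence $u_k\rightharpoonup u$ weakly in $H^{s,2}$, $u_k\to u$ pointwise and strongly in $\ell^q(\mathbb{Z}^d)$ for all $q\ge2$.

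Next I would show $u^{\pm}\neq0$ and $m>0$. From $\langle J_{s,2}'(u_k),u_k^+\rangle=0$, discarding the nonnegative terms carrying the factors $b$ and $-K(u_k)\ge0$, one gets $\|u_k^+\|_{H^{s,2}}^2\le\int_{\mathbb{Z}^d}|u_k^+|^p\log(u_k^+)^2\,d\mu\le\varepsilon\|u_k^+\|_2^2+C_\varepsilon\|u_k^+\|_q^q$ by (\ref{1.2}); choosing $\varepsilon$ small and using (\ref{ac}) yields a uniform bound $\|u_k^+\|_{H^{s,2}}\ge\delta>0$, hence $\|u_k^+\|_q^q\ge\delta_1>0$, and the strong $\ell^q$ convergence forces $\|u^+\|_q>0$, so $u^+\neq0$; similarly $u^-\neq0$. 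Applied to an arbitrary $w\in\mathcal{M}_{s,2}$, the same lower bound gives $\|w\|_{H^{s,2}}\ge\delta$, so $m\ge(\frac12-\frac1p)\delta^2>0$.

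For the main step, since $u^\pm\neq0$, Lemma \ref{4} gives a unique pair $(r_0,t_0)$ with $\tilde u:=r_0u^++t_0u^-\in\mathcal{M}_{s,2}$, so $J_{s,2}(\tilde u)\ge m$; it remains to prove $J_{s,2}(\tilde u)\le m$. First I would establish $\langle J_{s,2}'(u),u^\pm\rangle\le0$ by passing to the limit in $\langle J_{s,2}'(u_k),u_k^\pm\rangle=0$: the quadratic and quartic terms are weakly lower semicontinuous, the logarithmic and $p$-terms converge by strong $\ell^q$ convergence, and the remaining products are handled using $\liminf(a_kb_k)\ge\liminf a_k\,\liminf b_k$ for nonnegative sequences together with $K(u_k)\to K(u)$. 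By Lemma \ref{l5} this gives $r_0,t_0\in(0,1]$. Then, using the identity $J_{s,2}(w)=(\frac12-\frac1p)\|w\|_{H^{s,2}}^2+b(\frac14-\frac1p)\|\nabla^s w\|_2^4+\frac{2}{p^2}\int_{\mathbb{Z}^d}|w|^p\,d\mu$ valid on $\mathcal{M}_{s,2}$ (all coefficients positive for $p>6$), together with $r_0,t_0\le1$ and $K(u)\le0$ which yield $\|\tilde u\|_{H^{s,2}}^2\le\|u\|_{H^{s,2}}^2$, $\|\nabla^s\tilde u\|_2^4\le\|\nabla^s u\|_2^4$ and $\int|\tilde u|^p\le\int|u|^p$, and the weak lower semicontinuity of $\|\cdot\|_{H^{s,2}}^2$, $\|\nabla^s\cdot\|_2^4$ and the strong convergence of $\int|u_k|^p$, I would bound $J_{s,2}(\tilde u)\le\liminf_k J_{s,2}(u_k)=m$. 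Hence $J_{s,2}(\tilde u)=m$ and $\tilde u$ is the desired minimizer.

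The main obstacle is the convergence $K(u_k)\to K(u)$, needed both for $\langle J_{s,2}'(u),u^\pm\rangle\le0$ and in the cross terms. Since $K(u)=-2\int_{\mathbb{Z}^d}\nabla^s u^+\nabla^s u^-\,d\mu$ is a pairing of only weakly convergent gradients, lower semicontinuity is insufficient, and I would instead prove strong convergence directly from the decay $w_s(x,y)\le C_{s,d}|x-y|^{-d-2s}$: writing $|K(u_k)-K(u)|$ as a finite sum of expressions of the form $\sum_{x}\sum_{y\neq x}w_s(x,y)|f(y)||g(x)|$, each is bounded by $C\|f\|_2\|g\|_2$ via Young's convolution inequality with the summable kernel $|z|^{-d-2s}$, exactly as in the estimate (\ref{26}), where one factor is $u_k^\pm-u^\pm\to0$ in $\ell^2$.
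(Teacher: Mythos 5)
Your argument is correct and follows essentially the same route as the paper's proof: minimizing sequence, boundedness via $J_{s,2}-\frac{1}{p}\langle J_{s,2}',\cdot\rangle$, the compact embedding of Lemma \ref{lg}, a uniform lower bound on $\|u_k^{\pm}\|_{H^{s,2}}$ giving $u^{\pm}\neq 0$, passage to the limit to obtain $\langle J_{s,2}'(u),u^{\pm}\rangle\leq 0$, and then Lemma \ref{l5} combined with the monotonicity in $(r,t)\in(0,1]^2$ of the reduced functional $\left(\frac{1}{2}-\frac{1}{p}\right)\|\cdot\|_{H^{s,2}}^{2}+b\left(\frac{1}{4}-\frac{1}{p}\right)\|\nabla^s\cdot\|_{2}^{4}+\frac{2}{p^{2}}\|\cdot\|_{p}^{p}$. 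The only (harmless) deviations are that the paper pushes the same chain of inequalities to a contradiction showing $r_0=t_0=1$, so that the weak limit $u$ itself is the minimizer, whereas you settle for $r_0u^{+}+t_0u^{-}$; and your explicit verification of $K(u_k)\rightarrow K(u)$ via the summable kernel, as in the estimate preceding (\ref{26}), supplies a detail the paper leaves implicit.
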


\begin{proof} 
Let $\{u_k\}\subset\mathcal{M}_{s,2}$ be a sequence such that
$$\lim\limits_{k\rightarrow\infty}J_{s,2}(u_k)=m.$$
By Proposition \ref{o}, one gets easily that $$\langle J'_{s,2}(u_k),u_k\rangle=\langle J'_{s,2}(u_k),u_k^+\rangle+\langle J'_{s,2}(u_k),u_k^-\rangle =0.$$
Then we have that
$$
\begin{aligned}
\lim _{k \rightarrow\infty} J_{s,2}\left(u_{k}\right) & =\lim _{k \rightarrow\infty}\left[J_{s,2}\left(u_{k}\right)-\frac{1}{p} \langle J_{s,2}^{\prime}\left(u_{k}\right), u_{k}\rangle\right] \\
& =\lim _{k \rightarrow\infty}\left[\left(\frac{1}{2}-\frac{1}{p}\right)\|u_k\|_{H^{s,2}}^{2}+b\left(\frac{1}{4}-\frac{1}{p}\right)\left\|\nabla^s u_k\right\|_{2}^{4}+\frac{2}{p^2}\int_{\mathbb{Z}^d} |u_k|^p\,d\mu\right] \\
& =m, 
\end{aligned}
$$
which means that
$$\left(\frac{1}{2}-\frac{1}{p}\right)\|u_k\|_{H^{s,2}}^{2}\leq \left(\frac{1}{2}-\frac{1}{p}\right)\|u_k\|_{H^{s,2}}^{2}+b\left(\frac{1}{4}-\frac{1}{p}\right)\left\|\nabla^s u_k\right\|_{2}^{4}+\frac{2}{p^2}\int_{\mathbb{Z}^d} |u_k|^p\,d\mu\leq m+1.$$
Therefore, we get that $\left\{u_{k}\right\}$ is bounded in $H^{s,2}$. By Lemma \ref{lg}, there exists $u \in H^{s,2}$ such that
\begin{equation}\label{29}
\begin{cases}u_{k} \rightharpoonup u, & \text { weakly in } H^{s,2}, \\ u_{k} \rightarrow u, & \text { pointwise in } \mathbb{Z}^d, \\ u_{k} \rightarrow u, & \text { strongly in } \ell^{q}(\mathbb{Z}^d), ~q \in[2,\infty].\end{cases}
\end{equation}
By (\ref{1.2}), for any $\varepsilon>0$ and $t\neq 0,$ we have
\begin{equation}\label{3.9}
|t|^p|\log t^2|\leq (\varepsilon |t|^{2}+C_\varepsilon |t|^{q}),\quad q>p.
\end{equation}
Moreover, we have
\begin{equation}\label{3.8}
\langle J'_{s,2}(u_k),u_k^{\pm}\rangle=\|u_k^{\pm}\|_{H^{s,2}}^{2}+b\left\|\nabla^s u_k\right\|_{2}^{2}\left(\|\nabla^s u_k^{\pm}\|_{2}^{2}-\frac{1}{2}K(u_k)\right)-\frac{a}{2}K(u_k)-\int_{\mathbb{Z}^d}\left|u_k^{\pm}\right|^{p} \log \left(u_k^{\pm}\right)^{2}\,d\mu=0.
\end{equation}
Then we get that
$$
\begin{aligned}
\|u_k^{\pm}\|_{H^{s,2}}^{2}< &\|u_k^{\pm}\|_{H^{s,2}}^{2}+b\left\|\nabla^s u_k\right\|_{2}^{2}\left(\|\nabla^s u_k^{\pm}\|_{2}^{2}-\frac{1}{2}K(u_k)\right)-\frac{a}{2}K(u_k)\\=&\int_{\mathbb{Z}^d}\left|u_k^{\pm}\right|^{p} \log \left(u_k^{\pm}\right)^{2}\,d\mu\\\leq &\int_{\mathbb{Z}^d} \left(\varepsilon|u_k^{\pm}|^{2}+C_\varepsilon|u_k^{\pm}|^{q}\right)\,d\mu\\\leq &\varepsilon\|u^{\pm}\|^2_{H^{s,2}}+C_\varepsilon\|u^{\pm}\|^q_{H^{s,2}}.
\end{aligned}
$$
Since $p>6$, we deduce that
$\|u_k^{\pm}\|_{H^{s,2}}\geq C>0,$ and hence $u^{\pm}\neq 0$. Therefore, for $p>6$,  by (\ref{29}), we derive that
$$
\begin{aligned}
m=&\lim _{k \rightarrow\infty}\left[\left(\frac{1}{2}-\frac{1}{p}\right)\|u_k\|_{H^{s,2}}^{2}+b\left(\frac{1}{4}-\frac{1}{p}\right)\left\|\nabla^s u_k\right\|_{2}^{4}+\frac{2}{p^2}\int_{\mathbb{Z}^d} |u_k|^p\,d\mu\right]\\ \geq&
\lim _{k \rightarrow\infty}\frac{2}{p^2}\int_{\mathbb{Z}^d} |u_k|^p\,d\mu\\=&
\frac{2}{p^2}\int_{\mathbb{Z}^d} |u|^p\,d\mu\\\geq &
\frac{2}{p^2}\int_{\mathbb{Z}^d} |u^{\pm}|^p\,d\mu\\>& 0.
\end{aligned}
$$
Then it follows from (\ref{29}), (\ref{3.9}), (\ref{3.8}) and Lebesgue dominated theorem that
\begin{align*}
&\|u^{\pm}\|_{H^{s,2}}^{2}+b\left\|\nabla^s u\right\|_{2}^{2}\left(\|\nabla^s u^{\pm}\|_{2}^{2}-\frac{1}{2}K(u)\right)-\frac{a}{2}K(u)-\int_{\mathbb{Z}^d}\left(|u^{\pm}|^{p} \log \left(u^{\pm}\right)^{2}\right)^-\,d\mu \\
\leq & \limsup _{k \rightarrow\infty}\left[\|u_k^{\pm}\|_{H^{s,2}}^{2}+b\left\|\nabla^s u_k\right\|_{2}^{2}\left(\|\nabla^s u_k^{\pm}\|_{2}^{2}-\frac{1}{2}K(u_k)\right)-\frac{a}{2}K(u_k)-\int_{\mathbb{Z}^d}\left(|u_k^{\pm}|^{p} \log \left(u_k^{\pm}\right)^{2}\right)^-\,d\mu\right] \\
= & \limsup_{k \rightarrow\infty} \int_{\mathbb{Z}^d}\left(|u_k^{\pm}|^{p} \log \left(u_k^{\pm}\right)^{2}\right)^+\,d\mu \\
= & \int_{\mathbb{Z}^d}\left(|u^{\pm}|^{p} \log \left(u^{\pm}\right)^{2}\right)^+\,d\mu,
\end{align*}
and hence
\begin{equation*}
\langle J_{s,2}^{\prime}\left(u\right), u^{\pm}\rangle=\|u^{\pm}\|_{H^{s,2}}^{2}+b\left\|\nabla^s u\right\|_{2}^{2}\left(\|\nabla^s u^{\pm}\|_{2}^{2}-\frac{1}{2}K(u)\right)-\frac{a}{2}K(u)-\int_{\mathbb{Z}^d}|u^{\pm}|^{p} \log \left(u^{\pm}\right)^{2}\,d\mu\leq0.
\end{equation*}
Then by Lemma \ref{l5}, there exist $r, t \in(0,1]$ such that $r u^{+}+t u^{-} \in \mathcal{M}_{s,2}$. We claim that $r=t=1$. Indeed, without loss of generality, we assume that $t<1$. Then by Proposition \ref{o}, we get that
$$
\begin{aligned}
\int_{\mathbb{Z}^d} \left|\nabla^s(ru^{+}+tu^{-})\right|^2 d \mu=&r^2\int_{\mathbb{Z}^d} \left|\nabla^s u^{+}\right|^2 d \mu+t^2\int_{\mathbb{Z}^d}\left|\nabla^s u^{-}\right|^2 d \mu-rtK(u)\\ <&\int_{\mathbb{Z}^d} \left|\nabla^s u^{+}\right|^2 d \mu+\int_{\mathbb{Z}^d}\left|\nabla^s u^{-}\right|^2 d \mu-K(u)\\ =&\int_{\mathbb{Z}^d} \left|\nabla^s u\right|^2 d \mu,
\end{aligned}
$$
which implies that
$$\left\|\nabla^s(r u^{+}+t u^{-})\right\|_{2}^{4}<\left\|\nabla^s u\right\|_{2}^{4},$$
and 
$$\|ru^{+}+t u^{-}\|_{H^{s,2}}^{2}<\|u\|_{H^{s,2}}^{2}.$$
Then we deduce that
$$
\begin{aligned}
m\leq &J_{s,2}(ru^{+}+tu^{-})=J_{s,2}\left(ru^{+}+tu^{-}\right)-\frac{1}{p} \left\langle J_{s,2}^{\prime}\left(ru^{+}+tu^{-}\right), ru^{+}+tu^{-}\right\rangle\\=& \left(\frac{1}{2}-\frac{1}{p}\right)\|ru^{+}+t u^{-}\|_{H^{s,2}}^{2}+b\left(\frac{1}{4}-\frac{1}{p}\right)\left\|\nabla^s(r u^{+}+t u^{-})\right\|_{2}^{4}+\frac{2}{p^2}\int_{\mathbb{Z}^d} |ru^{+}+t u^{-}|^p\,d\mu\\ <&\left(\frac{1}{2}-\frac{1}{p}\right)\|u\|_{H^{s,2}}^{2}+b\left(\frac{1}{4}-\frac{1}{p}\right)\left\|\nabla^s u\right\|_{2}^{4}+\frac{2}{p^2}\int_{\mathbb{Z}^d} |u|^p\,d\mu\\ \leq &
\limsup_{k \rightarrow\infty}\left[\left(\frac{1}{2}-\frac{1}{p}\right)\|u_k\|_{H^{s,2}}^{2}+b\left(\frac{1}{4}-\frac{1}{p}\right)\left\|\nabla^s u_k\right\|_{2}^{4}+\frac{2}{p^2}\int_{\mathbb{Z}^d} |u_k|^p\,d\mu\right]\\ =&
\limsup_{k \rightarrow\infty}\left[J_{s,2}\left(u_{k}\right)-\frac{1}{p} \langle J_{s,2}^{\prime}\left(u_{k}\right), u_{k}\rangle\right] \\=& m.
\end{aligned}
$$
This is a contradiction. Thus $r=t=1$, and we get that $u \in \mathcal{M}_{s,2}$. In the following, we prove that $J_{s,2}\left(u\right)=m>0.$
$$
\begin{aligned}
m\leq &J_{s,2}(u)=J_{s,2}\left(u\right)-\frac{1}{p} \left\langle J_{s,2}^{\prime}\left(u\right), u\right\rangle\\=&\left(\frac{1}{2}-\frac{1}{p}\right)\|u\|_{H^{s,2}}^{2}+b\left(\frac{1}{4}-\frac{1}{p}\right)\left\|\nabla^s u\right\|_{2}^{4}+\frac{2}{p^2}\int_{\mathbb{Z}^d} |u|^p\,d\mu\\ \leq &
\limsup_{k \rightarrow\infty}\left[\left(\frac{1}{2}-\frac{1}{p}\right)\|u_k\|_{H^{s,2}}^{2}+b\left(\frac{1}{4}-\frac{1}{p}\right)\left\|\nabla^s u_k\right\|_{2}^{4}+\frac{2}{p^2}\int_{\mathbb{Z}^d} |u_k|^p\,d\mu\right]\\ =&
\limsup_{k \rightarrow\infty}\left[J_{s,2}\left(u_{k}\right)-\frac{1}{p} \langle J_{s,2}^{\prime}\left(u_{k}\right), u_{k}\rangle\right] \\=& m.
\end{aligned}
$$
Then we have that $J_{s,2}(u)=m>0.$ The proof is completed.
\end{proof}

{\bf Proof of Theorem \ref{t2}.} 
By Lemma \ref{l9}, we only need to prove that $J'_{s,2}(u)=0$. We prove this result by contradiction, suppose $J'_{s,2}(u)\neq 0$. Then there exists $\phi\in H^{s,2}$ such that
$$
\langle J'_{s,2}(u),\phi\rangle\leq -1.
$$
Therefore there exists $0<\varepsilon_1<1$ small enough such that
$$
\left\langle J_{s,2}^{\prime}\left(r u^{+}+t u^{-}+\sigma \phi\right), \phi\right\rangle\leq-\frac{1}{2},\quad |r-1|+|t-1|+|\sigma| \leq \varepsilon_1.
$$
Let $\eta$ be a cutoff function defined as
$$
\eta(r, t)= 
\begin{cases}1, & |r-1| \leq \frac{1}{2} \varepsilon_1 \text { and }|t-1| \leq \frac{1}{2} \varepsilon_1, \\ 0, & |r-1| \geq \varepsilon_1 \text { or }|t-1| \geq \varepsilon_1.
\end{cases}
$$
Now we consider $J_{s,2}\left(r u^{+}+t u^{-}+\varepsilon_1 \eta(r, t) \phi\right)$.
If $|r-1| \geq \varepsilon_1$ or $|t-1| \geq \varepsilon_1,\,  \eta(r, t)=0$, then we have
$$J_{s,2}\left(r u^{+}+t u^{-}+\varepsilon_1 \eta(r, t) \phi\right) =J_{s,2}\left(r u^{+}+t u^{-}\right).$$
If $|r-1| \leq \varepsilon_1$ and $|t-1| \leq \varepsilon_1$, we have
$$
\begin{aligned}
J_{s,2}\left(r u^{+}+t u^{-}+\varepsilon_1 \eta(r, t) \phi\right) &  =J_{s,2}\left(r u^{+}+t u^{-}\right)+\int_{0}^{1} \left\langle J_{s,2}^{\prime}\left(r u^{+}+t u^{-}+\sigma \varepsilon_1 \eta(r, t) \phi\right),\varepsilon_1 \eta(r, t) \phi\right\rangle\, d \sigma \\
& \leq J_{s,2}\left(r u^{+}+t u^{-}\right)-\frac{1}{2} \varepsilon_1 \eta(r, t).
\end{aligned}
$$
Since $u \in \mathcal{M}_{s,2}$, for $(r, t) \neq(1,1)$, by Lemma \ref{l3}, we have 
$$
J_{s,2}\left(r u^{+}+t u^{-}+\varepsilon_1 \eta(r, t) \phi\right) \leq J_{s,2}\left(r u^{+}+t u^{-}\right)<J_{s,2}(u)=m.
$$
For $(r, t)=(1,1)$,
$$
J_{s,2}\left(r u^{+}+t u^{-}+\varepsilon \eta(r, t) \phi\right) \leq J_{s,2}\left(r u^{+}+t u^{-}\right)-\frac{1}{2} \varepsilon_1 \eta(1,1)=J_{s,2}(u)-\frac{1}{2} \varepsilon_1<J_{s,2}(u)=m.
$$
Then for $0<\delta<1-\varepsilon_1$, we have 
\begin{equation}\label{30}
\sup _{\delta \leq r, t \leq 2-\delta} J_{s,2}\left(r u^{+}+t u^{-}+\varepsilon_1 \eta(r, t) \phi\right)<J_{s,2}(u)=m.    
\end{equation}
Let $v=r u^{+}+t u^{-}+\varepsilon_1 \eta(r, t) \phi$. Define
$$
\Phi_{1}(r, t)=\left\langle J_{s,2}^{\prime}(v), v^{+}\right\rangle,\qquad \Phi_{2}(r, t)= \left\langle J_{s,2}^{\prime}(v), v^{-}\right\rangle.
$$
Since $u\in\mathcal{M}_{s,2}$, we have
\begin{equation}\label{3.7}
\begin{aligned}
\int_{\mathbb{Z}^d}\left|u^{+}\right|^{p} \log \left(u^{+}\right)^{2}\,d\mu= &\|u^{+}\|_{H^{s,2}}^{2}+b\left\|\nabla^s u^{+}\right\|_{2}^{4}-\frac{a}{2}K(u)+\frac{b}{2}K^2(u)
+b\left\|\nabla^s u^{+}\right\|_{2}^{2}\left\|\nabla^s u^{-}\right\|_{2}^{2}\\&-\frac{b}{2}K(u)\left(\left\|\nabla^s u^{+}\right\|_{2}^{2}+\left\|\nabla^s u^{-}\right\|_{2}^{2}\right)
-bK(u)\|\nabla^s u^{+}\|^2_2,
\end{aligned}
\end{equation}
and 
\begin{equation}\label{4.7}
\begin{aligned}
\int_{\mathbb{Z}^d}\left|u^{-}\right|^{p} \log \left(u^{-}\right)^{2}\,d\mu= &\|u^{-}\|_{H^{s,2}}^{2}+b\left\|\nabla^s u^{-}\right\|_{2}^{4}-\frac{a}{2}K(u)+\frac{b}{2}K^2(u)
+b\left\|\nabla^s u^{+}\right\|_{2}^{2}\left\|\nabla^s u^{-}\right\|_{2}^{2}\\&-\frac{b}{2}K(u)\left(\left\|\nabla^s u^{+}\right\|_{2}^{2}+\left\|\nabla^s u^{-}\right\|_{2}^{2}\right)
-bK(u)\|\nabla^s u^{-}\|^2_2.
\end{aligned}
\end{equation}
By the definition of $\eta$, for $r=\delta<1-\varepsilon_1$ and $t \in[\delta, 2-\delta]$, we have $\eta(r, t)=0$ and $r\leq t$. Hence by (\ref{3.7}) and $p>6$, we get that
\begin{align*}
&\Phi_{1}(\delta, t)=\langle J_{s,2}^{\prime}(\delta u^++tu^-),\delta u^{+}\rangle\\=&\delta^{2}\|u^{+}\|_{H^{s,2}}^{2}+b \delta^{4}\left\|\nabla^s u^{+}\right\|_{2}^{4}-\delta^p\int_{\mathbb{Z}^d}\left|u^{+}\right|^{p} \log \left(u^{+}\right)^{2}\,d\mu-\delta^p\log \delta^2\int_{\mathbb{Z}^d}\left|u^{+}\right|^{p}\,d\mu-\frac{a}{2}\delta tK(u)\\&+\frac{b}{2}\delta^2t^2K^2(u)
+b \delta^{2} t^{2}\left\|\nabla^s u^{+}\right\|_{2}^{2}\left\|\nabla^s u^{-}\right\|_{2}^{2}-\frac{b}{2}\delta tK(u)\left(\delta^2\left\|\nabla^s u^{+}\right\|_{2}^{2}+t^2\left\|\nabla^s u^{-}\right\|_{2}^{2}\right)
\\&-b\delta^3tK(u)\|\nabla^s u^{+}\|^2_2\\ \geq &\delta^{2}\|u^{+}\|_{H^{s,2}}^{2}+b \delta^{4}\left\|\nabla^s u^{+}\right\|_{2}^{4}-\delta^p\int_{\mathbb{Z}^d}\left|u^{+}\right|^{p} \log \left(u^{+}\right)^{2}\,d\mu-\delta^p\log \delta^2\int_{\mathbb{Z}^d}\left|u^{+}\right|^{p}\,d\mu-\frac{a}{2}\delta^2K(u)\\&+\frac{b}{2}\delta^4K^2(u)
+b \delta^{4}\left\|\nabla^s u^{+}\right\|_{2}^{2}\left\|\nabla^s u^{-}\right\|_{2}^{2}-\frac{b}{2}\delta^4K(u)\left(\left\|\nabla^s u^{+}\right\|_{2}^{2}+\left\|\nabla^s u^{-}\right\|_{2}^{2}\right)
-b\delta^4K(u)\|\nabla^s u^{+}\|^2_2\\=&(\delta^2-\delta^p)\|u^{+}\|_{H^{s,2}}^{2}+b(\delta^{4}-\delta^p)\left\|\nabla^s u^{+}\right\|_{2}^{4}-\delta^p\log \delta^2\int_{\mathbb{Z}^d}\left|u^{+}\right|^{p}\,d\mu-\frac{a}{2}(\delta^2-\delta^p)K(u)\\&+\frac{b}{2}(\delta^4-\delta^p)K^2(u)
+b (\delta^{4}-\delta^p)\left\|\nabla^s u^{+}\right\|_{2}^{2}\left\|\nabla^s u^{-}\right\|_{2}^{2}-\frac{b}{2}(\delta^4-\delta^p)K(u)\left(\left\|\nabla^s u^{+}\right\|_{2}^{2}+\left\|\nabla^s u^{-}\right\|_{2}^{2}\right)
\\&-b(\delta^4-\delta^p)K(u)\|\nabla^s u^{+}\|^2_2\\>&-\delta^p\log \delta^2\int_{\mathbb{Z}^d}\left|u^{+}\right|^{p}\,d\mu\\>&0.
\end{align*}
For $r=2-\delta>1+\varepsilon_1$ and $t \in[\delta, 2-\delta]$, we have $\eta(r, t)=0$ and $r\geq t$. Similarly, we have 
\begin{align*}
&\Phi_{1}(2-\delta, t)=\langle J_{s,2}^{\prime}((2-\delta)u^++tu^-),(2-\delta)u^{+}\rangle\\=&(2-\delta)^{2}\|u^{+}\|_{H^{s,2}}^{2}+b (2-\delta)^{4}\left\|\nabla^s u^{+}\right\|_{2}^{4}-(2-\delta)^p\int_{\mathbb{Z}^d}\left|u^{+}\right|^{p} \log \left(u^{+}\right)^{2}\,d\mu\\&-(2-\delta)^p\log (2-\delta)^2\int_{\mathbb{Z}^d}\left|u^{+}\right|^{p}\,d\mu-\frac{a}{2}(2-\delta)tK(u)+\frac{b}{2}(2-\delta)^2t^2K^2(u)
\\&+b (2-\delta)^{2} t^{2}\left\|\nabla^s u^{+}\right\|_{2}^{2}\left\|\nabla^s u^{-}\right\|_{2}^{2}-\frac{b}{2}(2-\delta)tK(u)\left((2-\delta)^2\left\|\nabla^s u^{+}\right\|_{2}^{2}+t^2\left\|\nabla^s u^{-}\right\|_{2}^{2}\right)
\\&-b(2-\delta)^3tK(u)\|\nabla^s u^{+}\|^2_2\\ \leq &(2-\delta)^{2}\|u^{+}\|_{H^{s,2}}^{2}+b (2-\delta)^{4}\left\|\nabla^s u^{+}\right\|_{2}^{4}-(2-\delta)^p\int_{\mathbb{Z}^d}\left|u^{+}\right|^{p} \log \left(u^{+}\right)^{2}\,d\mu\\&-(2-\delta)^p\log (2-\delta)^2\int_{\mathbb{Z}^d}\left|u^{+}\right|^{p}\,d\mu-\frac{a}{2}(2-\delta)^2K(u)+\frac{b}{2}(2-\delta)^4K^2(u)
\\&+b (2-\delta)^{4}\left\|\nabla^s u^{+}\right\|_{2}^{2}\left\|\nabla^s u^{-}\right\|_{2}^{2}-\frac{b}{2}(2-\delta)^4K(u)\left(\left\|\nabla^s u^{+}\right\|_{2}^{2}+\left\|\nabla^s u^{-}\right\|_{2}^{2}\right)
-b(2-\delta)^4K(u)\|\nabla^s u^{+}\|^2_2\\=&\left[(2-\delta)^2-(2-\delta)^p\right]\|u^{+}\|_{H^{s,2}}^{2}+b\left[(2-\delta)^{4}-(2-\delta)^p\right]\left\|\nabla^s u^{+}\right\|_{2}^{4}-(2-\delta)^p\log (2-\delta)^2\int_{\mathbb{Z}^d}\left|u^{+}\right|^{p}\,d\mu\\&-\frac{a}{2}\left[(2-\delta)^2-(2-\delta)^p\right]K(u)+\frac{b}{2}\left[(2-\delta)^2-(2-\delta)^p\right]K^2(u)
\\&+b\left[(2-\delta)^{4}-(2-\delta)^p\right]\left\|\nabla^s u^{+}\right\|_{2}^{2}\left\|\nabla^s u^{-}\right\|_{2}^{2}-\frac{b}{2}\left[(2-\delta)^4-(2-\delta)^p\right]K(u)\left(\left\|\nabla^s u^{+}\right\|_{2}^{2}+\left\|\nabla^s u^{-}\right\|_{2}^{2}\right)
\\&-b\left[(2-\delta)^4-(2-\delta)^p\right]K(u)\|\nabla^s u^{+}\|^2_2\\<&-(2-\delta)^p\log (2-\delta)^2\int_{\mathbb{Z}^d}\left|u^{+}\right|^{p}\,d\mu\\<& 0.
\end{align*}
Hence, we have
$$
\Phi_{1}(\delta, t)>0,\qquad \Phi_{1}(2-\delta, t)<0,\quad t \in[\delta, 2-\delta].
$$
Similarly, based on the arguments as above and (\ref{4.7}), we obtain that
$$
\Phi_{2}(r, \delta)>0,\qquad \Phi_{2}(r, 2-\delta)<0,\quad r \in[\delta, 2-\delta].
$$
By the Miranda's theorem \cite{K}, there exists $\left(r_{0}, t_{0}\right) \in(\delta, 2-\delta) \times(\delta, 2-\delta)$ such that $\Phi_{1}(r_0,t_0)=0$ and $\Phi_2(r_0,t_0)=0$, which means that $$\widetilde{u}=r_{0} u^{+}+t_{0} u^{-}+\varepsilon_1 \eta\left(r_{0}, t_{0}\right) \phi \in\mathcal{M}_{s,2}.$$ However, by (\ref{30}), we get that $m\leq J_{s,2}(\widetilde{u})<m,$ which is a contradiction. Therefore, we deduce that $J'_{s,2}(u)=0$. The proof is completed.
\qed

\
\

Finally, we prove the multiplicity of solutions to the equation (\ref{0.2}).

\
\

{\bf Proof of Theorem \ref{t3}.}
Let $u$ and $v$ be the ground state solution and the ground state sign-changing solution to the equation (\ref{0.2}), respectively. We first show that $J_{s,2}(v)>2J_{s,2}(u)$. In fact, since $v \in \mathcal{M}_{s,2}$ satisfies $J_{s,2}(v)=m$, we have $v^{ \pm} \neq 0$. Then by Lemma \ref{lz}, there exists a unique $r_{v}>0$ such that $r_{v} v^{+} \in \mathcal{N}_{s,2}$, and a unique $t_{v}>0$ such that $t_{v} v^{-} \in \mathcal{N}_{s,2}$. Moreover, by Theorem \ref{t1},  we have $u \in \mathcal{N}_{s,2}$ satisfies $J_{s,2}(u)=\hat{c}$.

By the fact $v^{\pm}\neq 0$, we have that $K(v)<0$. A direct calculation yields that
$$
\begin{aligned}
&J_{s,2}\left(r_{v}v^++t_{v}v^-\right)\\=&J_{s,2}\left(r_{v}v^{+}\right)+J_{s,2}\left(t_{v}v^{-}\right)-\frac{a}{2}r_{v}t_{v}K(v)+\frac{b}{4}r_{v}^2t_{v}^2K^2(v)+\frac{b}{2}\left\|\nabla^s (r_{v}v^{+})\right\|_{2}^{2}\left\|\nabla^s (t_{v}v^{-})\right\|_{2}^{2}\\ &-\frac{b}{2}r_{v}t_{v}K(v)\left(\left\|\nabla^s (r_{v}v^{+})\right\|_{2}^{2}+\left\|\nabla^s (t_{v}v^{-})\right\|_{2}^{2}\right)\\>&J_{s,2}\left(r_{v}v^{+}\right)+J_{s,2}\left(t_{v}v^{-}\right).
\end{aligned}
$$
By Lemma \ref{l3}, we get that
$$
m=J_{s,2}\left(v\right) \geq J_{s,2}\left(r_{v} v^{+}+t_{v} v^{-}\right)>J_{s,2}\left(r_{v} v^{+}\right)+J_{s,2}\left(t_{v} v^{-}\right) \geq 2J_{s,2}(u)=2\hat{c}.
$$

In the following, we prove the multiplicity of solutions to the equation (\ref{0.2}).
Let $w\in H^{s,2}$ be a nontrivial weak solution to the equation \ref{0.2}. Then by Lemma \ref{li}, $w$ is a pointwise solution to the equation (\ref{0.2}). It is clear that $z=(-w)\in H^{s,2}\backslash\{0\}$. Moreover, we have

$$\begin{aligned}
    (-\Delta)^{s} z(x)=& \sum_{y \in \mathbb{Z}^d, y \neq x} w_{s}(x, y)(z(x)-z(y))\\ =&-\sum_{y \in \mathbb{Z}^d, y \neq x} w_{s}(x, y)(w(x)-w(y))\\=&-(-\Delta)^{s} w(x).
\end{aligned}$$
Then we get that
$$\begin{aligned}
    \left(a+b \int_{\mathbb{Z}^d}|\nabla^s z|^{2} d \mu\right) (-\Delta)^s z=&-\left(a+b \int_{\mathbb{Z}^d}|\nabla^s w|^{2} d \mu\right) (-\Delta)^s w\\=&h(x) w-|w|^{p-2}w \log w^{2}\\=&h(x)w+|-w|^{p-2}(-w) \log (-w)^{2}\\ =&-h(x)z+|z|^{p-2} z \log v^{2}.
\end{aligned}$$
This means that $z$ is also a nontrivial weak solution to the equation (\ref{0.2}), which differs from $w$. Since $u$ is a ground state solution and $v$ is a ground state sign-changing solution to the equation (\ref{0.2}), we derive that $-u$ is a nontrivial weak solution and $-v$ is a sign-changing solution to the equation (\ref{0.2}) respectively. Clearly, $-u \not \equiv u$ and $-v \not\equiv v$.

Since $m>2 \hat{c}> \hat{c}$, we deduce that $\pm u$ and $\pm v$ are four different nontrivial weak solutions to the equation (\ref{0.2}). \qed

\section{Acknowledgements}
The author thanks the referee for helpful comments and valuable suggestions on this paper. Moreover, the author is supported by the National Natural Science Foundation of China, no.12401135.

\
\

{ \bf Declaration}
The author declares that there are no conflict of interest regarding the publication of this paper.

\
\

{ \bf Data availability}
No data was used for the research described in this paper.

\end{document}